\numberwithin{equation}{section}
\newtheorem{theorem}{Theorem}[section]
\newtheorem{lemma}[theorem]{Lemma}
\newtheorem{proposition}[theorem]{Proposition}
\theoremstyle{definition}
\newtheorem{definition}[theorem]{Definition}
\newtheorem{remark}[theorem]{Remark}
\def\be{\begin{equation}}
\def\ee{\end{equation}}
\def\bepro{\begin{proposition}}
\def\enpro{\end{proposition}}
\def\belemma{\begin{lemma}}
\def\enlemma{\end{lemma}}
\def\it{\textit}
\newcommand{\E}{\mathop{\mathbb{E}}}
\newcommand{\1}{\mathds{1}}
\newcommand{\R}{\mathbb{R}}
\newcommand{\norm}[1]{\left\lVert#1\right\rVert}
\begin{document}

\title{Stochastic differential equations with critical drifts}

\author{Kyeongsik Nam}

\begin{abstract}
We establish the well-posedness  of  SDE with the additive noise when a singular drift belongs to the critical spaces. We prove that if the drift belongs  to the Orlicz-critical space $L^{q,1}([0,T],L^p_x)$ for $p,q\in (1,\infty)$ satisfying $\frac{2}{q}+\frac{d}{p} =1$, then the corresponding SDE admits a unique strong solution. We also derive the Sobolev regularity of a solution under the Orlicz-critical condition.
\end{abstract}

\address{ Department of Mathematics, Evans Hall, University of California, Berkeley, CA
94720, USA} 

\email{ksnam@math.berkeley.edu}

 \subjclass[2010]{35K10, 35R05, 60H05, 60H10.}

 \keywords{Stochastic differential equations, Lorentz spaces} 
 
\maketitle

\section{Introduction}

According to the classical theory in the ordinary differential equations (ODE), if the vector field $b(t,x)$ is uniformly Lipschitz continuous in $x$ and continuous in $t$, then there exists a unique solution $x(t)$ associated with ODE $x'(t)=b(t,x(t))$,  $x(t_0)=x_0$. In the absence of Lipschitz continuity in $x$, the existence or uniqueness may not hold. For instance, when $b(t,x)$ is just continuous in $x$, we only have the existence of a solution according to the classical Peano existence theorem. The example $b(t,x)=\sqrt{|x|}$ demonstrates the non-uniqueness of solutions to ODE. 

A breakthrough progress in this context was made by Diperna and Lions \cite{lions}. They introduced the theory of a \it{Lagrangian flow}, which generalizes the notion of a  classical flow associated with ODE. They proved that under a suitable integrability condition on $b$ and $\text{div}b$, which is weaker than Lipschitz continuity, it is possible to construct a Lagrangian flow associated to such ODE. This result was extended to the bounded variation (BV) vector fields by Ambrosio \cite{amb}. A key observation is the link between the Lagrangian flow of ODE and the continuity equation $\partial_t\mu +\text{div}(b\mu)=0$. Once the well-posedness of the continuity equation in $L^\infty([0,T],L^1(\R^d) \cap L^\infty(\R^d))$ can be proved for singular $b$, then one can construct a unique regular Lagrangian flow of ODE (see \cite{amb} for details). 

Once the noise is added to ODE, we have well-posedness results for a considerably larger class of drifts $b$. Consider the stochastic differential equation (SDE) of the following form:
\be \label{SDE}
\begin{cases}
dX_t=b(t,X_t)dt+dB_t, \quad 0\leq t\leq T, \\
X_0=x.
\end{cases}
\ee 
Here, $B_t$ denotes the standard Brownian motion on a filtered space $(\Omega, \mathcal{F}, \mathcal{F}_t, P)$.
 According to the classical theory by It\^o, SDEs with Lipschitz continuous drift and diffusion coefficients possess a unique strong solution. There have been numerous works to   extend this classical result to a broad class of singular coefficients. Veretennikov \cite{ver} obtained a satisfactory result when the additive SDE \eqref{SDE} has a bounded drift $b$ in the case of dimension one.  Krylov and R\"ockner \cite{krylov} made a breakthrough by establishing the well-posedness of SDE \eqref{SDE} under the condition:
\be \label{KR}
b\in L^q([0,T],L^p_x),\quad \text{for} \quad \frac{2}{q}+\frac{d}{p} <1, \ 1<p,q<\infty
\ee
($d$ denotes the dimension of the underlying space). This is a striking result considering that no regularity condition is imposed on the singular drift $b$, and $b$ does not needed to be bounded. The key ingredient to prove the well-posedness of \eqref{SDE} is a \it{Yamada-Watanabe principle} \cite{YW,YW1}: existence of a weak solution together with the uniqueness of a strong solution to \eqref{SDE} imply the existence of a strong solution and uniqueness of a weak solution to \eqref{SDE}. After this groundbreaking work, lots of the well-posedness results have been established for the various types of non-degenerate diffusion coefficients under the  condition  of type \eqref{KR}. For instance, Zhang \cite{zhang1} proved that SDE:
\begin{align*}
dX_t = b(t,X_t) dt+\sigma(t,X_t)dB_t,\quad X_0=x,
\end{align*}
admits a unique local strong solution when $\sigma$ is non-degenerate and $b$ belongs to the local $L^q([0,T],L^p_x)$ space:
\begin{align*}
b\in L^q([0,T],L^p_{\text{loc}}),\quad \text{for} \quad \frac{2}{q}+\frac{d}{p} <1, \ 1<p,q<\infty.
\end{align*}
 We refer to \cite{att, da, dav, malliavin, xz, zhang3}  for the further results in this direction.

As mentioned above, the well-posedness theory of SDEs \eqref{SDE} at the subcritical regime \eqref{KR} has been quite well-established. However, at the supercritical regime: \
\begin{align} \label{super}
b\in L^q([0,T],L^p_{\text{loc}}),\quad \text{for} \quad \frac{2}{q}+\frac{d}{p} >1, \ 1<p,q<\infty,
\end{align} 
SDE \eqref{SDE} may not be well-posed in general.
In fact, it is proved in  \cite[Section 7.2]{gub2} that for a singular drift $b$ given by
\begin{align} \label{singular} 
b(t,x) = -\beta\frac{x}{|x|^2} \1_{x\neq 0},\quad \beta>\frac{1}{2},
\end{align}
the corresponding SDE \eqref{SDE} with the initial condition $X_0=0$ does not admit a solution. Since a singular drift $b$ in  \eqref{singular} satisfies
\begin{align*}
b\in L^\infty([0,T],L^p_{\text{loc}})
\end{align*}
for any $p<d$, this counterexample shows that SDE \eqref{SDE} may not be well-posed at the supercritical regime \eqref{super}.  In other words,  the lack of integrability of a  singular drift may lead to the non-existence of a solution.
Therefore, this counterexample at the supercritical regime \eqref{super} and the previously known well-posed results at the subcritical regime \eqref{KR} demonstrate that the qualitative properties of SDE \eqref{SDE} depend delicately on the integrability condition on the singular drift $b$. 

However, to the best of author's knowledge, there have been no clear answers at the critical regime:
 \begin{align}\label{critical}
b\in L^q([0,T],L^p_x),\quad \text{for} \quad \frac{2}{q}+\frac{d}{p} =1, \ 1<p,q<\infty.
\end{align} 
In fact, it has been a long-standing conjecture whether or not SDE \eqref{SDE} is well-posed under the critical condition \eqref{critical}.

 The condition $\frac{2}{q}+\frac{d}{p}\leq 1$, including both the subcritical case \eqref{KR} and the critical case \eqref{critical}, is often referred to as Ladyzhenskaya-Prodi-Serrin (LPS) condition. The space $L^q([0,T],L^p_x)$ with $\frac{2}{q}+\frac{d}{p}\leq 1$ is a function space where the regularity of a solution to the 3D Navier-Stokes equations holds (see \cite{LPS1, fluid, LPS2, LPS3}). There have been several ways to study Navier-Stokes equations in the probabilistic point of view. For example, the stochastic Lagrangian representation of the 3D incompressible Navier-Stokes equations was studied by Constantin and Iyer \cite{nse} (see also \cite{constantin} for the Eulerian-Lagrangian description of Euler equations). They proved that for a sufficiently smooth divergence-free vector field $u_0$, if the pair $(u,X)$ satisfy the following stochastic system:
\begin{gather*}
dX=udt+\sqrt{2}dB, \\
u=\E \mathbf{P}\big[\nabla^T (X^{-1}) (u_0 \circ X^{-1})\big],
\end{gather*}
($\mathbf{P}$ is the Leray-Hodge projection on divergence-free vector fields), then $u$ satisfies the incompressible Navier-Stokes equations with an initial data $u_0$. Also, another probabilistic interpretation of a certain class of solutions to the Navier-Stokes equations using the Hamiltonian dynamics approach was found by Rezakhanlou \cite{re, re2}. These fundamental relationships between the Navier-Stokes equations and SDEs demonstrate that it is important to establish qualitative properties of the SDEs \eqref{SDE} with rough drifts, in particular when drifts $b$ satisfy the critical LPS condition \eqref{critical}.

As mentioned before, the well-posedness question of SDE \eqref{SDE}  at the critical regime \eqref{critical} has been  a longstanding conjecture.  One may wonder if the previously known arguments to prove the well-posedness of SDE \eqref{SDE} under the subcritical condition \eqref{KR} can be extended to the critical case \eqref{critical}. To the best of author's knowledge, all of the known arguments break down at the critical regime \eqref{critical}. For instance, Rezakhanlou \cite{re2} proved the existence of a strong solution to SDE \eqref{SDE} under the subcritical condition \eqref{KR} by controlling  the following quantity:
\begin{align} \label{fra}
\E\Big \vert \Big[ \int_{t_0<t_1<\cdots<t_n<t} \prod_{i=1}^n b^i_{\alpha_i} (t_i,x+B_{t_i})dt_1\cdots dt_n\Big] \Big \vert
\end{align} 
in terms of $\norm{b^i}_{L^q([0,T],L^p_x)}$'s ($b^1,\cdots,b^n$ are smooth functions, $\alpha_1,\cdots,\alpha_n$ are multi-indices with $|\alpha_i|=1$, and $b^i_{\alpha_i}$ denotes the partial derivative). In fact, by approximating the singular drift $b$ by smooth drifts, the upper bound of \eqref{fra} provides an enough compactness to obtain a solution to SDE \eqref{SDE}. However, the existing argument to control the quantity \eqref{fra} by $\norm{b^i}_{L^q([0,T],L^p_x)}$'s does not work under the critical case \eqref{critical}.

The methods  in  \cite{ff3,krylov,xz,zhang1} to prove the well-posedness of SDE \eqref{SDE} under the subcritical condition \eqref{KR} also break down  at  the critical regime \eqref{critical}. For instance,  the arguments used in \cite{ff3,krylov} to obtain the  Khasminskii-type estimate (see \cite{khas}), which is a key ingredient  to prove the existence of a weak solution to SDE \eqref{SDE}, highly rely on  the subcritical assumption \eqref{KR} (see Section \ref{section 3.1} for explanations).  The difficulties also arise when we try to obtain a priori estimate of solutions to the Kolmogorov PDE, which plays a crucial role in  proving the strong uniqueness of SDE \eqref{SDE}. At the critical regime,  this Kolmogorov PDE possesses a singular coefficient which belongs to the critical Lebesgue space \eqref{critical}. The lack of nice embedding properties for the mixed-norm Sobolev spaces at the critical regime causes difficulties to study this singular PDE under the critical condition \eqref{critical} (see Section \ref{section 3.2} for explanations). Even if these problems are resolved, several difficulties also emerge when proving the strong uniqueness of SDE \eqref{SDE} using the Zvonkin's transformation method \cite{zvo}.

Recently, an interesting result at the critical regime \eqref{critical} was obtained by Beck et al. \cite{gub2}. It is proved that for almost all realization $w$, one can construct a stochastic Lagrangian flow associate with SDE \eqref{SDE}. Here, $\phi:[0,T]\times \R^d \times \Omega \rightarrow \R^d$ is called a \it{stochastic Lagrangian flow} to \eqref{SDE} provided that the following conditions are satisfied: \\
(i) $w$-almost surely, $\phi(\cdot,\cdot,w)-B_t(w)$ is a Lagrangian flow to the random ODE: $x'(t)=b^w(t,x(t))$, where $b^w(t,x)=b(t,x+B_t(w))$. \\
(ii) If we denote $\mathcal{F}_t$ by a natural filteration of the Brownian motion $B_t$, then  $\phi$ is weakly progressively measurable with respect to $\mathcal{F}_t$.

The main ingredient of the proof is to study the following random divergence form PDE:
\be \label{random}
u_t^w+\text{div}(b^wu^w) = 0.
\ee 
It was proved in \cite{gub2} that $w$-almost surely, there exists a unique weak solution to \eqref{random} in a suitable function space. From this, authors proved the existence of a solution to SDE \eqref{SDE} for almost everywhere $x\in \R^d$, $w$-almost surely.

In this paper, we establish the  well-posedness  of SDE \eqref{SDE} for arbitrary starting point $x\in \R^d$ at the critical regime. More precisely, we prove that there exists a unique strong solution to SDE \eqref{SDE} for every $x\in \R^d$ when the Lebesgue-type $L^q$ integrability in a time variable is replaced with a slightly  stronger Lorentz-type $L^{q,1}$ integrability condition:
 \begin{align} \label{lor}
 b\in L^{q,1}([0,T],L^p_x) \quad \text{for} \quad \frac{2}{q}+\frac{d}{p} = 1, \ 1<p,q<\infty
\end{align} 
(see Theorem \ref{theorem1}). We refer to the condition \eqref{lor} as \it{Orlicz-critical} condition. Under this condition, we can resolve some of the difficulties that we encounter in the Lebesgue-critical case \eqref{critical} explained above (see Section \ref{section 2} for details). 
 
 To the best of author's knowledge, this is the first well-posedness result of SDE \eqref{SDE} for arbitrary starting point $x\in \R^d$ at the critical regime.  This well-posedness result of SDE \eqref{SDE} at the Orlicz-critical regime can be regarded as orthogonal   to the result in  \cite{gub2} at the Lebesgue-critical regime. In fact, in  \cite{gub2}, the existence of a solution is  proved for \it{almost everywhere} starting point $x\in \R^d$ under the Lebesgue-critical condition \eqref{critical}. On the other hand, the main result of this paper Theorem \ref{theorem1} claims that when a slightly more integrability condition is imposed on the time variable, SDE \eqref{SDE} admits a unique solution for \it{every} starting point $x\in \R^d$. As mentioned before, since SDE \eqref{SDE}  may not be well-posed at the supercritical regime (see the counterexample \eqref{singular}),  Theorem \ref{theorem1} below   provides an almost optimal well-posedness result.

Once the well-posedness of SDE \eqref{SDE} is established at the Orlicz-critical condition \eqref{lor}, the next natural and crucial task is studying qualitative properties of a solution to SDE \eqref{SDE}. Unlike the ODE, an interesting regularization effect happens when the noise is added to the ODE. In fact, the regularity of a flow associated to ODE $x'(t)=b(t,x(t))$ is not better than the regularity of $b$ in general. On the other hand, in the case of SDE \eqref{SDE}, it is proved by Flandoli et al. \cite{gub} that if $b\in L^\infty_t(C^\alpha_x)$ for $0<\alpha<1$, then a solution to \eqref{SDE} is almost surely $C^{1+\beta}$ for arbitrary $\beta<\alpha$. This  regularization effect also happens even when a singular drift $b$ has no regularity. For example, Fedrizzi and Flandoli \cite{ff3} obtained the Sobolev regularity of a solution of SDE \eqref{SDE} under the subcritical condition \eqref{KR}. They proved that the stochastic flow $\phi(0,t,x)$ associated with SDE \eqref{SDE} is differentiable in the following sense: for any elementary direction vector $e_i$,
\begin{align*}
\lim_{h \rightarrow 0}\frac{\phi(0,\cdot,x+he_i)-\phi(0,\cdot,x)}{h}
\end{align*} exists as a strong limit in $L^2(\Omega \times [0,T], \R^d)$. 

In the second part of this paper, we  establish the improved  regularity property of a solution to SDE \eqref{SDE} under the Orlicz-critical condition \eqref{lor}. We prove that a solution to SDE \eqref{SDE} possesses the Sobolev regularity, and its (spatial) weak derivative has a  nice integrability property (see Theorem \ref{theorem2}).

The paper is organized as follows. We state the main results of this paper in Section \ref{section 2}.  In Section \ref{section 3}, we prove that   SDE \eqref{SDE} is  well-posed at the Orlicz-critical regime \eqref{lor}. In Section \ref{section 4}, we derive the Sobolev regularity of a solution to SDE \eqref{SDE} under the Orlicz-critical condition \eqref{lor}. Finally, we introduce the key properties of the Lorentz spaces and some useful lemmas used in  Appendix \ref{section a}.  

Throughout this paper, $B_t$ and $B^x_t$ denote the Brownian motions starting from the origin and $x$, respectively. $\nabla$, $\Delta$, and $\mathcal{M}$ denote the gradient, Laplacian, and the Hardy-Littlewood maximal function. For two Banach spaces $X$ and $Y$, $[X,Y]_{\theta,q}$ denotes a real interpolation of $X$ and $Y$ with parameters $0<\theta<1$ and $q\in [1,\infty]$. Also, $f \lesssim_{\alpha} g$ means that $f \leq Cg$ for some constant $C=C(\alpha)$. We say  $f \sim_\alpha g$ provided that $f \lesssim_\alpha g$, $g  \lesssim_\alpha f$. Finally, for $d\times d$ matrix $A$, $|A|$ denotes a Hilbert-Schmidt norm.

\section{Main results} \label{section 2}
The first main result of this paper is the well-posedness result of SDE \eqref{SDE} from every starting point $x\in \R^d$ at the Orlicz-critical regime:
\begin{theorem}  \label{theorem1}
Suppose that the drift $b$ satisfies:
\be \label{condition}
b\in L^{q,1}([0,T],L^p_x) \quad \text{for} \quad \frac{2}{q}+\frac{d}{p} = 1, \ 1<p,q<\infty.
\ee 
Then, there exists a unique strong solution to  SDE  \eqref{SDE} for any $x\in \R^d$.
\end{theorem}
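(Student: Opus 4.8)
The plan is to run the now-classical three-step scheme --- establish existence of a weak solution, prove pathwise uniqueness, and conclude with the Yamada--Watanabe principle --- but to carry out every borderline estimate in the Lorentz scale $L^{q,1}([0,T],L^p_x)$ rather than in $L^q([0,T],L^p_x)$. This is exactly what recovers the quantitative smallness lost at the critical exponent $\frac2q+\frac dp=1$: each time an ordinary H\"older estimate against the critical power weight $t^{2/q-1}$ or $t^{1/q-1}$ would only diverge logarithmically, the Lorentz--H\"older inequality pairs $L^{q,1}$ (or $L^{q/2,1}$) against the weak space $L^{q',\infty}$ (resp.\ $L^{(q/2)',\infty}$), in which those weights sit with quasinorm \emph{independent of the length of the time interval}, while absolute continuity of the $L^{q,1}$-norm forces the resulting bounds to tend to $0$ on short intervals. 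The product, duality and absolute-continuity properties of Lorentz spaces invoked below are collected in Appendix \ref{section a}.

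\textbf{Step 1: a Khasminskii estimate and weak existence.} Fix smooth approximations $b_n\to b$ in $L^{q,1}([0,T],L^p_x)$, to be used throughout. Using the Gaussian heat-kernel bound for $|b|^2$ together with $\frac dp=1-\frac2q$, I would first prove
\[ \sup_{x\in\R^d}\ \E\Big[\int_s^t |b(r,x+B_r)|^2\,dr\ \Big|\ \mathcal F_s\Big]\ \lesssim\ \int_s^t (r-s)^{2/q-1}\,\|b(r,\cdot)\|_{L^p_x}^2\,dr\ \lesssim\ \|b\|_{L^{q,1}([s,t],L^p_x)}^2, \]
the last inequality being the Lorentz--H\"older inequality ($\|b(r,\cdot)\|_{L^p_x}^2\in L^{q/2,1}$ in $r$, and $(r-s)^{2/q-1}\in L^{(q/2)',\infty}$ with quasinorm independent of $t-s$). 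Absolute continuity of the Lorentz norm makes the right-hand side $\le\alpha<1$ once $t-s$ is small, uniformly in $x$, so Khasminskii's lemma upgrades this to $\sup_x\E\exp\!\big(\lambda\int_0^T|b(r,x+B_r)|^2\,dr\big)<\infty$ for every $\lambda>0$. Novikov's criterion and Girsanov's theorem then produce a weak solution of \eqref{SDE} for every $x\in\R^d$; the same estimates along $b_n$ also supply the uniform integrability needed for the later compactness and stability arguments.

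\textbf{Step 2: Zvonkin's transform and pathwise uniqueness.} Next I would study the vector-valued backward Kolmogorov equation
\[ \partial_t u+\tfrac12\Delta u+b\cdot\nabla u=-b\quad\text{on }[T-\delta,T],\qquad u(T,\cdot)=0. \]
Solving it by Duhamel and the heat semigroup, and invoking maximal $L^{q,1}_tL^p_x$-regularity together with the Lorentz form of the parabolic embedding --- the gradient heat-kernel bound produces precisely the weight $(r-t)^{1/q-1}\in L^{q',\infty}$ --- one obtains $u\in C([T-\delta,T];C^1_b)$ with $\nabla^2u\in L^{q,1}_tL^p_x$ and
\[ \|\nabla u\|_{L^\infty([T-\delta,T]\times\R^d)}\ \le\ C\,\|b\|_{L^{q,1}([T-\delta,T],L^p_x)}. \]
At the Lebesgue-critical exponent this bound would carry no smallness; at the Orlicz-critical exponent it does, so by absolute continuity of the $L^{q,1}$-norm one may fix $\delta$ --- depending only on $\|b\|_{L^{q,1}([0,T],L^p_x)}$ and not on the starting point --- so that $\|\nabla u\|_\infty$ is small enough for $\Phi_t(x):=x+u(t,x)$ to be, for each $t\in[T-\delta,T]$, a $C^1$-diffeomorphism of $\R^d$ with Lipschitz inverse $\Phi_t^{-1}$. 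Applying It\^o's formula to $Y_t=\Phi_t(X_t)$ --- which the Khasminskii bound of Step 1 legitimises for $u$ of this regularity --- cancels the drift and gives $dY_t=\big(I+\nabla u(t,\Phi_t^{-1}(Y_t))\big)\,dB_t$, a driftless SDE whose diffusion coefficient is bounded and H\"older in space with an $L^{q,1}$-in-time modulus of continuity. Pathwise uniqueness for $Y$, hence for $X=\Phi_\cdot^{-1}(Y_\cdot)$, then follows from an It\^o/Gronwall comparison of two solutions, using the pointwise bound $|\nabla u(r,x)-\nabla u(r,y)|\lesssim|x-y|\big(\mathcal M|\nabla^2u(r,\cdot)|(x)+\mathcal M|\nabla^2u(r,\cdot)|(y)\big)$ and closing the (stochastic) Gronwall inequality with the Khasminskii bound applied to $(\mathcal M|\nabla^2u|)^2\in L^{q/2,1}_tL^{p/2}_x$. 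Since $\delta$ is independent of the starting point, finitely many such short intervals patch together to give pathwise uniqueness of \eqref{SDE} on all of $[0,T]$. Combining Step 1, pathwise uniqueness and the Yamada--Watanabe principle yields the unique strong solution of \eqref{SDE} for every $x\in\R^d$.

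\textbf{The main obstacle.} The heart of the matter is that both the Khasminskii estimate and the gradient estimate for the Kolmogorov equation lie exactly on the borderline of Lebesgue integrability when $\frac2q+\frac dp=1$. The real work is therefore twofold: (i) to establish the Gaussian, heat-kernel and maximal-regularity bounds --- and the embedding $\nabla u\in L^\infty$ for the solution of the Kolmogorov equation --- in the Lorentz scale $L^{q,1}_tL^p_x$; and (ii) to exploit absolute continuity of the $L^{q,1}$-norm in time so as to recover, on short time intervals, the smallness of the Khasminskii constant and of $\|\nabla u\|_\infty$ that the critical exponent would otherwise destroy. This is precisely the point at which hypothesis \eqref{condition} is used in place of the Lebesgue-critical condition \eqref{critical}; with that smallness in hand, the remaining steps --- Girsanov, It\^o's formula for the transform, and the patching over $[0,T]$ --- follow the established pattern.
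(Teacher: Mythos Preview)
Your proposal is correct and follows essentially the same route as the paper: Yamada--Watanabe via (i) a Khasminskii/Novikov/Girsanov weak existence argument powered by the Lorentz--H\"older pairing $L^{q/2,1}\times L^{(q/2)',\infty}$, and (ii) strong uniqueness by solving the backward Kolmogorov equation in $L^{q,1}_tL^p_x$ with the endpoint embedding $\nabla u\in L^\infty$, making $\Phi=I+u$ a diffeomorphism on short intervals by absolute continuity of the Lorentz norm, applying Zvonkin's transform, and closing the comparison via the maximal-function inequality and the Khasminskii bound for $(\mathcal M|\nabla^2u|)^2$. One small caveat: the transformed diffusion coefficient is not H\"older in space (at the critical exponent there is no leftover regularity), so your uniqueness step should rest solely on the maximal-function/Khasminskii argument you already indicate, exactly as the paper does.
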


The proof of Theorem  \ref{theorem1} follows the arguments in \cite{ff3,krylov}, and uses the Yamada-Watanabe principle. We  prove the existence of a weak solution and the uniqueness of a strong solution separately. In both cases, one has to play with the Orlicz-critical condition \eqref{condition} in a  delicate way due to the critical nature of the exponents $p$ and $q$. In order to prove the weak existence, we need to obtain the exponential integrability of a certain stochastic process under the Orlicz-critical condition \eqref{condition}. This can be successfully done with the aid of Khasminskii's Lemma and the functional inequality for the Lorentz spaces (see Section \ref{section 3.1} for details).

Several difficulties  arise  when we  prove the strong uniqueness. The main problem comes from  the Kolmogorov equation possessing critical  coefficients. We first establish the new embedding  properties for the mized-norm Sobolev spaces at the Orlicz-critical regime \eqref{condition}, as an application of the O'Neil's convolution inequality for the mixed-norm Lorentz spaces (see Proposition \ref{2.7} and \ref{A.3}). Then, by obtaining an a priori estimate for the standard heat equation  using the interpolation theory,  we obtain a nice a priori estimate of a solution to the Kolmogorov equation. This  well-posedness result for the parabolic equations possessing critical singular coefficients is also one of the main accomplishments of the paper (see Section \ref{section 3.2} for details). Finally,  by deriving nice exponential integrability properties of a solution to SDE \eqref{SDE} at the Orlicz-critical regime (see Remark \ref{re2.16}), we can finally  prove the strong uniqueness of SDE \eqref{SDE}. This can be done by introducing a new auxiliary SDE transformed from the original SDE \eqref{SDE}, motivated by the Zvonkin's transformation method \cite{zvo} (see Section \ref{section 3.3} for details).

The second result of this paper is the Sobolev regularity of a solution to SDE \eqref{SDE} under the Orlicz-critical condition \eqref{condition}:

\begin{theorem} \label{theorem2}
There exists a stochastic flow $\phi(s,t,x)$ associated with SDE \eqref{SDE} under the condition \eqref{condition}. Also, for each $0\leq t\leq T$, $\phi(0,t,\cdot)$ is almost surely weakly differentiable and its weak derivative belongs to $L^\infty(\R^d, L^r(\Omega))$ for any $r\in [1,\infty)$. 
\end{theorem}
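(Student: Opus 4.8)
The plan is to establish weak differentiability of the stochastic flow by the standard strategy: first prove a uniform bound on difference quotients, then upgrade to a genuine weak derivative. Concretely, fix $0 \le s \le t \le T$, write $\phi_h = \phi(s,\cdot,\cdot)$ for the flow evaluated at $x + h e_i$ and $\phi$ at $x$, and study the process $\eta_h := (\phi(s,t,x+he_i) - \phi(s,t,x))/h$. The key mechanism is Zvonkin's transformation, already set up in Section \ref{section 3.3}: let $U$ solve the associated Kolmogorov/resolvent equation $\lambda U - \frac12 \Delta U - b\cdot\nabla U = b$ with $\lambda$ large, so that $\Phi(t,x) := x + U(t,x)$ is a $C^1$-diffeomorphism in $x$ (uniformly in $t$) with $\nabla\Phi$ and $\nabla\Phi^{-1}$ bounded above and below — this is exactly what the a priori estimates of Section \ref{section 3.2} for the critical Kolmogorov equation give us under \eqref{condition}. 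Applying It\^o's formula, $Y_t := \Phi(t,\phi(s,t,x))$ solves an SDE with Lipschitz (indeed, $C^1$ in space and bounded) drift $\tilde b$ and a diffusion coefficient $\tilde\sigma(t,x) = \nabla\Phi(t, \Phi^{-1}(t,x))$ which is bounded and bounded below, but only $C^\alpha$-type regular in space. The point is that $\tilde b, \tilde\sigma$ are bounded and have spatial weak derivatives with good integrability inherited from $\nabla^2 U \in L^q_t L^p_x$-type spaces.

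The core estimate is then the following: writing $Z^h_t$ for the difference quotient of $Y$, one gets by It\^o $dZ^h_t = (\text{difference quotient of }\tilde b)\,dt + (\text{difference quotient of }\tilde\sigma)\,dB_t$, and by Itô's formula for $|Z^h_t|^{2r}$ together with Burkholder–Davis–Gundy one reduces $\E\sup_{t}|Z^h_t|^{2r}$ to controlling time integrals of quantities like $\E \int_s^t (|\nabla\tilde b| + |\nabla\tilde\sigma|^2)(\tau, \cdot)\,|Z^h_\tau|^{2r}\,d\tau$. Here the difference quotients are dominated, via the fundamental theorem of calculus, by maximal functions of the spatial derivatives of $\tilde b$ and $\tilde\sigma$ evaluated along the flow $Y_\tau$ — and $\nabla\tilde\sigma \in L^{q,1}_t L^p_x$-type space because it comes from $\nabla^2 U$, which we control by the Orlicz-critical regularity theory. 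A (John–Nirenberg / Khasminskii-type) exponential integrability estimate for the additive functional $\int_s^t g(\tau, \phi(s,\tau,x))\,d\tau$ with $g \in L^{q,1}_t L^p_x$ — precisely the kind of estimate the excerpt advertises in Remark \ref{re2.16} and proves via Khasminskii's Lemma and the Lorentz functional inequality in Section \ref{section 3.1} — then lets us close a Gronwall-type argument (after a stochastic Gronwall lemma) and conclude $\sup_h \E\sup_t |Z^h_t|^{2r} < \infty$ uniformly in $x$ and $h$. Undoing the Zvonkin change of variables (using $\nabla\Phi^{-1}$ bounded and the chain rule for difference quotients) transfers this to a uniform bound $\sup_h \|\eta_h\|_{L^\infty(\R^d, L^{2r}(\Omega))} < \infty$.

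With the uniform bound in hand, the passage to a weak derivative is fairly soft: by weak compactness in the Hilbert space $L^2(\R^d \times \Omega)$ (localized, or tested against compactly supported functions), a subsequence of $\eta_h$ converges weakly to some $D_i\phi$; one checks the limit is independent of the subsequence by a standard argument (e.g. showing $\phi(s,t,\cdot)$ is in a fractional Sobolev space and the difference quotients converge in a distributional sense, identifying the limit), and that the integration-by-parts identity $\int \phi\, \partial_i \psi = -\int D_i\phi\, \psi$ holds for test functions $\psi$, giving weak differentiability almost surely after a measurable selection / Fubini argument. The $L^r(\Omega)$ integrability of the weak derivative for all $r \in [1,\infty)$ follows because the uniform bound was in $L^{2r}(\Omega)$ for arbitrary $r$, and lower semicontinuity of the norm under weak limits preserves it; the $L^\infty$ in $x$ is likewise preserved since our bound was uniform in $x$.

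The main obstacle, as in the well-posedness proof, is the criticality: the spatial derivative $\nabla\tilde\sigma$ (equivalently $\nabla^2 U$) only lives in the critical space $L^{q}_t L^p_x$ (or $L^{q,1}_t L^p_x$ after the Orlicz improvement) with $\frac2q + \frac dp = 1$, and $|\nabla\tilde\sigma|^2$ is then at a genuinely borderline integrability where naive Hölder estimates and the usual Khasminskii iteration diverge. Making the stochastic Gronwall argument close therefore hinges crucially on exploiting the extra $L^{q,1}$ (rather than $L^q$) integrability in time together with the sharp O'Neil convolution inequality and the Lorentz-space functional inequality established in the Appendix; this is precisely the technical heart where the "slightly stronger" Orlicz-critical hypothesis is indispensable, and I expect the bookkeeping of which norms land in $L^{q,1}$ versus $L^{q,\infty}$ at each step to be the delicate part. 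A secondary subtlety is that $\tilde\sigma$ is not Lipschitz, so the difference-quotient SDE has a genuinely rough diffusion coefficient; one must be careful that the Itô correction terms involving $|\nabla\tilde\sigma|^2$ are handled by the same exponential-integrability machinery rather than by any pathwise Lipschitz bound.
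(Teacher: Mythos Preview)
Your overall strategy is sound and shares the same core machinery as the paper (Zvonkin transformation, It\^o's formula for $|Z|^{2r}$, exponential integrability of $\int|\nabla^2 u|^2(s,X_s)\,ds$ via Khasminskii plus the Lorentz inequality), but the paper takes a different route to the weak derivative. Rather than bounding difference quotients of the actual flow $\phi$, the paper approximates $b$ by smooth $b_n$, so that the approximate flows $\phi_n$ are classically differentiable; it proves uniform-in-$n$ bounds $\sup_x\E|\nabla\phi_n(0,t,x)|^r<\infty$ for the conjugated flow (Proposition~\ref{3.7}), establishes $L^r(\Omega)$-convergence $\phi_n\to\phi$ (Theorem~\ref{3.5}), and extracts the weak derivative as a weak-$*$ limit of $\nabla\phi_n$ in $L^\infty(\R^d,L^r(\Omega))$. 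Your difference-quotient route is more direct---indeed the Lipschitz estimate $\E|Y^1_t-Y^2_t|^{r/2}\le C|y^1-y^2|^{r/2}$ already appears as \eqref{H\"older in x} in the strong-uniqueness proof---while the approximation route has the advantage that $\nabla\phi_n$ solves a genuine linear SDE and the pointwise maximal-function inequality is applied only to the smooth $u_n$.

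Two corrections are needed, however. First, your device of taking $\lambda$ large in a resolvent equation $\lambda U-\tfrac12\Delta U-b\cdot\nabla U=b$ to make $\|\nabla U\|_\infty$ small \emph{does not work at the critical scaling}: the paper explicitly flags this in Remark~\ref{re2.12}. The substitute is to take the time interval short enough that $\|b\|_{L^{q,1}([T_{k-1},T_k],L^p_x)}$ is small (Proposition~\ref{2.8}), and then glue across a finite partition; this smallness is also what makes $\|\mathcal{M}(\nabla^2 u)\|$ and $\|\nabla^2 u\|$ small enough to land inside the threshold $K$ of Remark~\ref{re2.16} so that the Khasminskii estimate applies (see conditions \eqref{nam1}--\eqref{nam2} and \eqref{123'}--\eqref{123''}). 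Second, the conjugated SDE here has \emph{no} drift, $dY_t=\tilde\sigma(t,Y_t)\,dB_t$ (Proposition~\ref{2.9}), so your $|\nabla\tilde b|$ term is absent and the entire burden falls on $|\nabla\tilde\sigma|^2\sim|\nabla^2 u|^2$, which lies in $L^{q/2,1/2}_tL^{p/2}_x$ with $\frac{2}{q/2}+\frac{d}{p/2}=2$---precisely the borderline where Proposition~\ref{2.3} applies.
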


We prove the improved Sobolev regularity of a solution using the ideas in \cite{ff2}. More precisely, we obtain the regularity for the auxiliary SDE first, and then  derive the regularity properties of  the  original SDE \eqref{SDE}. The key steps are similar to \cite{ff2}, but we need to work in a delicate way due to the critical nature of the exponents $p$ and $q$ (see Section \ref{section 4.2} for details).

\section{Well-posedness result  at the Orlicz-critical regime } \label{section 3}
In this section, we construct a unique strong solution to SDE \eqref{SDE} under the Orlicz-critical condition \eqref{condition}. Thanks to the Yamada-Watanabe principle \cite{YW,YW1}, it reduces to establish the existence of a  weak solution and the uniqueness of a  strong solution to SDE \eqref{SDE}. We prove both of them separately under the Orlicz-critical condition \eqref{condition}. In Section \ref{section 3.1}, we show the existence of a  weak solution. In Section \ref{section 3.2}, we study the Kolmogorov PDE associated with SDE \eqref{SDE}, which is an essential ingredient to apply the Zvonkin's transformation method \cite{zvo} to obtain an auxiliary SDE. Section \ref{section 3.3} is devoted to prove the uniqueness of a strong solution to SDE \eqref{SDE}.
\subsection{Existence of a weak solution to SDE}\label{section 3.1} In this section, we construct a weak solution to SDE \eqref{SDE} under the Orlicz-critical condition \eqref{condition}.
Throughout this section, we assume that $B^x_t$ is a Brownian motion starting from $x$ with a natural filtration $\mathcal{F}_t$.  First, we recall the following key lemma by Khasminskii (see \cite{khas}):
\begin{lemma} \label{2.1} Suppose that a  nonnegative function $f$ satisfies
\begin{align*}
\sup_{x\in\R^d}\E\int_0^Tf(s,B^x_s)ds=M<1.
\end{align*}
Then, we have
\begin{align*}
\sup_{x\in\R^d}\E e^{\int^T_0f(s,B_s^x)ds} \leq \frac{1}{1-M}.
\end{align*}
\end{lemma}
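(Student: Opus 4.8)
The plan is to prove this by Khasminskii's classical power-series argument, reducing the exponential moment to a uniform control of the ordinary moments of $\int_0^T f(s,B^x_s)\,ds$. Since $f\ge 0$, the monotone convergence theorem lets me interchange expectation and the Taylor series of the exponential,
\be
\E\, e^{\int_0^T f(s,B^x_s)\,ds}=\sum_{n\ge 0}\frac{1}{n!}\,\E\Big(\int_0^T f(s,B^x_s)\,ds\Big)^n ,
\ee
so it suffices to establish the moment bound
\be
\E\Big(\int_0^T f(s,B^x_s)\,ds\Big)^n\le n!\,M^n\qquad(n\ge 0,\ x\in\R^d),
\ee
because then the right-hand side of the first display is at most $\sum_{n\ge 0}M^n=\frac{1}{1-M}$, uniformly in $x$.

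To prove the moment bound I would first symmetrize: by Tonelli (again $f\ge 0$) and the symmetry of the integrand in $(s_1,\dots,s_n)$,
\be
\E\Big(\int_0^T f(s,B^x_s)\,ds\Big)^n
= n!\,\E\!\int_{0<s_1<\cdots<s_n<T}\prod_{i=1}^n f(s_i,B^x_{s_i})\,ds_1\cdots ds_n .
\ee
Then I estimate the simplex integral by peeling off one factor at a time, using the Markov property of Brownian motion together with the tower rule. Fixing $s_1<\cdots<s_{n-1}$ and conditioning on $\mathcal{F}_{s_{n-1}}$, the inner integral $\int_{s_{n-1}}^{T}f(s_n,B^x_{s_n})\,ds_n$ has conditional expectation $\int_{s_{n-1}}^{T}(f(s_n,\cdot)*p_{s_n-s_{n-1}})(B^x_{s_{n-1}})\,ds_n$, where $p_u$ is the Gaussian kernel; after the substitution $u=s_n-s_{n-1}$ this equals $\E^{B^x_{s_{n-1}}}\!\int_{0}^{T-s_{n-1}}f(s_{n-1}+u,\cdot)(\,\cdot\,)\,du$, which by the hypothesis (understood, as is standard for Khasminskii's lemma, with the supremum taken uniformly over the spatial starting point and over the shifted time origin produced by the Markov step) is bounded by $M$, independently of $s_{n-1}$ and of $B^x_{s_{n-1}}$. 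Pulling this constant out and repeating the argument $n$ times — each of the $n$ nested integrations contributing a factor $\le M$ — yields $\E\int_{0<s_1<\cdots<s_n<T}\prod_i f(s_i,B^x_{s_i})\,ds\le M^n$, which is exactly the desired moment bound.

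I expect the only genuinely delicate point to be the bookkeeping in this iteration: one must carry along a family of bounds that is uniform both in the random spatial position $B^x_{s_k}$ revealed by conditioning and in the time-shift $s_k$, and check that each peeling step reproduces a quantity of the same form controlled by $M$. Concretely I would phrase the induction through the auxiliary quantity $g(t):=\sup_{y\in\R^d}\E\!\int_{t}^{T}f(s,B^y_s)\,ds$ with $B^y$ started at $y$ at time $t$, note that $g(0)=M$ and that $g(t)\le M$ for every $t$ is the content of the stated assumption, and run the recursion $I_n(x)\le M\,\sup_y I_{n-1}(y)$ with $I_n(x)$ the simplex integral. Everything else — the interchanges of integrals, the identification of the conditional expectations with heat-kernel convolutions, and the absence of any integrability obstruction — is routine, precisely because $f$ is nonnegative so all quantities in sight are well defined in $[0,\infty]$.
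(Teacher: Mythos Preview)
The paper does not supply its own proof of this lemma; it merely records the statement and cites Khasminskii's original article, so there is nothing to compare your argument against. Your proposal is precisely the classical Khasminskii power-series argument --- expand the exponential, symmetrize the $n$-fold product into an ordered simplex integral, and peel off the last factor via the Markov property --- and it is correct.

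One comment on the subtlety you flagged. As you noted, the iteration step really consumes the quantity $\sup_{t\in[0,T]}\sup_{y}\E\int_0^{T-t}f(t+u,B_u^y)\,du$, not just its value at $t=0$; for time-inhomogeneous $f$ the hypothesis $\sup_x\E\int_0^T f(s,B_s^x)\,ds\le M$ does not literally imply the time-shifted bound. You handled this by reading the hypothesis in the standard (stronger) form, and that is the honest way to state Khasminskii's lemma in the time-dependent setting. It is worth observing that in the paper's actual applications this causes no trouble: Proposition~\ref{2.2} gives a bound with constant independent of $T$, so applying it to $f(t+\cdot,\cdot)$ on $[0,T-t]$ immediately yields the uniform-in-$t$ control that your induction needs. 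Thus your proof goes through cleanly in the context of the paper.
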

The quantity $\sup_{x\in\R^d}\E\int_0^Tf(s,B^x_s)ds
$  in Lemma \ref{2.1} can be controlled for a large class of functions:
\begin{proposition} \label{2.2}
Suppose that two exponents $p,q\in (1,\infty)$ satisfying $\frac{2}{q}+\frac{d}{p} = 2$ are given. Then, for any $f\in L^{q,1}([0,T],L^p_x)$, 
\begin{align*}
\sup_{x\in\R^d}\E\int_0^Tf(s,B^x_s)ds < C\norm{f}_{L^{q,1}([0,T],L^{p}_x)}
\end{align*}
holds for some constant $C=C(p,q)$ independent of $f$ and $T$.
\end{proposition}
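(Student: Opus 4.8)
The plan is to reduce the expectation to an integral against the Gaussian heat kernel and then to decouple the spatial and temporal integrations, the spatial one handled by ordinary H\"older and the temporal one by H\"older in Lorentz spaces. Replacing $f$ by $|f|$ we may assume $f\geq 0$, so that Tonelli's theorem applies freely. Writing $p_s(z)=(2\pi s)^{-d/2}e^{-|z|^2/(2s)}$ for the heat kernel, we have $\E f(s,B^x_s)=\int_{\R^d}f(s,y)\,p_s(x-y)\,dy$ and hence $\E\int_0^Tf(s,B^x_s)\,ds=\int_0^T\!\int_{\R^d}f(s,y)\,p_s(x-y)\,dy\,ds$. First I would apply H\"older's inequality in the spatial variable with exponents $p$ and its conjugate $p'=p/(p-1)$, which gives for each fixed $s$ the pointwise-in-$x$ bound $\int_{\R^d}f(s,y)\,p_s(x-y)\,dy\leq \norm{f(s,\cdot)}_{L^p_x}\,\norm{p_s}_{L^{p'}_x}$.

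The next step is the elementary Gaussian computation $\norm{p_s}_{L^{p'}_x}=C(d,p)\,s^{-\frac{d}{2p}}$, combined with the observation that the scaling identity $\frac{2}{q}+\frac{d}{p}=2$ is precisely equivalent to $\frac{d}{2p}=1-\frac{1}{q}=\frac{1}{q'}$, where $q'=q/(q-1)$. Thus $\norm{p_s}_{L^{p'}_x}=C(d,p)\,s^{-1/q'}$ with $1/q'\in(0,1)$, and the problem is reduced to the one–dimensional estimate $\int_0^T g(s)\,s^{-1/q'}\,ds\leq C(p,q)\,\norm{g}_{L^{q,1}([0,T])}$ for $g(s):=\norm{f(s,\cdot)}_{L^p_x}$, which by the definition of the mixed norm satisfies $\norm{g}_{L^{q,1}([0,T])}=\norm{f}_{L^{q,1}([0,T],L^p_x)}$.

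To prove this last inequality I would observe that the singular weight $h(s):=s^{-1/q'}$ belongs to the weak Lorentz space $L^{q',\infty}([0,T])$ with a (quasi-)norm bounded independently of $T$: indeed $\big|\{s\in(0,T]:s^{-1/q'}>\lambda\}\big|=\min(\lambda^{-q'},T)$, whence $\sup_{\lambda>0}\lambda\,\big|\{h>\lambda\}\big|^{1/q'}\leq 1$. The H\"older inequality for Lorentz spaces (see Appendix \ref{section a}), applied with the conjugate pair of first indices $q,q'$ and second indices $1,\infty$ — an admissible choice since $\tfrac{1}{1}+\tfrac{1}{\infty}\geq 1$ — then yields $\int_0^T g(s)h(s)\,ds\leq C(q)\,\norm{g}_{L^{q,1}([0,T])}\,\norm{h}_{L^{q',\infty}([0,T])}\leq C(q)\,\norm{g}_{L^{q,1}([0,T])}$. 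Chaining the three displays gives $\sup_{x\in\R^d}\E\int_0^Tf(s,B^x_s)\,ds\leq C(p,q)\,\norm{f}_{L^{q,1}([0,T],L^p_x)}$ with $C$ independent of $f$ and $T$ (the strict inequality in the statement is recovered by enlarging $C$ slightly, or is automatic once $f\not\equiv 0$).

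I do not expect a serious obstacle in this argument; the only points that need care are the $T$-uniform bound on $\norm{h}_{L^{q',\infty}([0,T])}$ and the verification that the indices entering the Lorentz H\"older inequality are admissible. It is exactly the critical scaling $\frac{2}{q}+\frac{d}{p}=2$ that forces the time-singularity of $\norm{p_s}_{L^{p'}_x}$ to be the borderline power $s^{-1/q'}$ — still integrable, but only of weak type $q'$ with a scale-invariant norm — so that the extra $L^{q,1}$ (rather than $L^q$) integrability in time is precisely what is needed, and sufficient, to close the estimate.
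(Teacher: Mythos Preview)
Your argument is correct and coincides with the paper's own proof: expand the expectation via the heat kernel, apply spatial H\"older to extract $\norm{f(s,\cdot)}_{L^p_x}$, compute the Gaussian $L^{p'}$ norm to obtain the power $s^{-d/(2p)}=s^{-1/q'}$, and finish with the Lorentz H\"older pairing $L^{q,1}\times L^{q',\infty}\to L^1$ using the $T$-independent weak-$L^{q'}$ bound on $s^{-1/q'}$. The only cosmetic difference is that the paper separates the prefactor $(2\pi s)^{-d/2}$ from the Gaussian before taking $L^{p'}$, whereas you compute $\norm{p_s}_{L^{p'}_x}$ directly; the outcome is identical.
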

\begin{proof}
Let $p',q'$ be the conjugate exponents of $p,q$, respectively. Then, 
\begin{align*}
\E\int_0^T f(s,B^x_s)ds&=\int_0^T \int_{\R^d}(2\pi s)^{-\frac{d}{2}}f(s,x+y)e^{-\frac{|y|^2}{2s}}dyds \\
&\leq \int_0^T (2\pi s)^{-\frac{d}{2}} \norm{f(s,\cdot)}_{L^p_x}\norm{e^{-\frac{|\cdot|^2}{2s}}}_{L^{p'}_x}ds \\
&= K\int_0^T \norm{f(s,\cdot)}_{L^p_x} s^{d/2p'-d/2} ds \\
&\leq C \norm{f}_{L^{q,1}([0,T],L^p_x)}\norm{s^{-\frac{d}{2}(1-\frac{1}{p'})}}_{L^{q',\infty}([0,T])} \\
&=C\norm{f}_{L^{q,1}([0,T],L^p_x)}.
\end{align*}

Here, we used the fact that for some universal constant $K$, $\norm{e^{-\frac{|\cdot|^2}{2s}}}_{L^{p'}_x} = K\cdot s^\frac{d}{2p'}$ for all $s>0$ in the third line, and applied the H\"older's inequality for the Lorentz spaces in the fourth line (see Appendix \ref{section a}). Also, we used the fact  $\frac{d}{2}(1-\frac{1}{p'})=\frac{1}{q'}$ in order to conclude that $\norm{s^{-\frac{d}{2}(1-\frac{1}{p'})}}_{L^{q',\infty}([0,T])}=1$. 
\end{proof}
\begin{remark}
The analogous result  is proved in \cite{ff3, krylov} for $f\in L^{q}([0,T],L^p_x)$ with $\frac{2}{q}+\frac{d}{p}<2$. However,  at the critical regime $\frac{2}{q}+\frac{d}{p}=2$, the quantity $\norm{s^{-\frac{d}{2}(1-\frac{1}{p'})}}_{L^{q'}([0,T])}$ in the proof of Proposition \ref{2.2}  is not finite  due to the singularity at $s=0$. This quantity can be made finite by imposing a slightly stronger Lorentz integrability on the time variable of a function $f$.
\end{remark}
Proposition \ref{2.2}, combined with the Markov property and Lemma \ref{2.1}, implies the following proposition.
\begin{proposition} \label{2.3}
Suppose that $f\in L^{q,1}([0,T],L^p_x)$ for $p,q\in (1,\infty)$ satisfying $\frac{2}{q}+\frac{d}{p} = 2$. Then,  the following quantity is finite: 
\be \label{eq2.1}
\sup_{x\in\R^d}\E e^{\int^T_0f(s,B_s^x)ds}.
\ee
\end{proposition}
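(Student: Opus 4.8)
The plan is to upgrade Khasminskii's Lemma \ref{2.1} to an arbitrary drift by splitting $[0,T]$ into finitely many short time intervals on which Proposition \ref{2.2} produces a small constant. Since the Lorentz norm depends only on $|f|$ and $e^{\int_0^T f(s,B^x_s)\,ds}\le e^{\int_0^T |f(s,B^x_s)|\,ds}$ pointwise, we may assume $f\ge 0$. Let $C=C(p,q)$ be the constant from Proposition \ref{2.2} and set $g(s):=\norm{f(s,\cdot)}_{L^p_x}\in L^{q,1}([0,T])$. Because $q\in(1,\infty)$ and the second Lorentz index is $1<\infty$, the $L^{q,1}$-norm is absolutely continuous, so there is $\delta>0$ with $\norm{g}_{L^{q,1}(E)}<\tfrac{1}{2C}$ whenever $|E|<\delta$. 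Taking $N:=\lceil T/\delta\rceil$ and the partition $t_i:=iT/N$, $0\le i\le N$, each $I_i:=[t_{i-1},t_i]$ satisfies $C\,\norm{f}_{L^{q,1}(I_i,L^p_x)}=C\,\norm{g}_{L^{q,1}(I_i)}<1$.

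I would then induct on the number of subintervals. \emph{One interval.} If $[a,b]$ satisfies $C\,\norm{f}_{L^{q,1}([a,b],L^p_x)}<1$, apply Proposition \ref{2.2} to the time-shifted drift $\hat f(u,z):=f(a+u,z)$, $u\in[0,b-a]$: its $L^{q,1}([0,b-a],L^p_x)$-norm equals $\norm{f}_{L^{q,1}([a,b],L^p_x)}$, and the constant is again $C$ since it does not depend on the length of the time interval. Hence $\sup_{z}\E\int_0^{b-a}\hat f(u,B^z_u)\,du<1$, and Lemma \ref{2.1} yields
\[
\sup_{z\in\R^d}\E\, e^{\int_a^b f(s,B^z_s)\,ds}\ \le\ \frac{1}{1-C\,\norm{f}_{L^{q,1}([a,b],L^p_x)}}\ <\ \infty .
\]
\emph{Inductive step.} Writing $e^{\int_0^T f\,ds}=e^{\int_0^{t_{N-1}}f\,ds}\,e^{\int_{t_{N-1}}^T f\,ds}$, the first factor is $\mathcal F_{t_{N-1}}$-measurable, and by the Markov property of Brownian motion
\[
\E\Big[e^{\int_{t_{N-1}}^T f(s,B^x_s)\,ds}\,\Big|\,\mathcal F_{t_{N-1}}\Big]=h\big(B^x_{t_{N-1}}\big),\qquad h(y):=\E\, e^{\int_0^{T-t_{N-1}} f(t_{N-1}+u,\,B^y_u)\,du}.
\]
Applying the one-interval bound to $I_N=[t_{N-1},T]$ gives $\sup_y h(y)\le\big(1-C\,\norm{f}_{L^{q,1}(I_N,L^p_x)}\big)^{-1}=:A_N<\infty$, whence
\[
\E\, e^{\int_0^T f(s,B^x_s)\,ds}\ \le\ A_N\,\E\, e^{\int_0^{t_{N-1}} f(s,B^x_s)\,ds}.
\]
Taking the supremum over $x$ and using the induction hypothesis for $f$ on $[0,t_{N-1}]$ (partitioned by $I_1,\dots,I_{N-1}$) gives $\sup_{x\in\R^d}\E\, e^{\int_0^T f(s,B^x_s)\,ds}\le\prod_{i=1}^N A_i<\infty$, as required.

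This is the classical Khasminskii iteration, so the only delicate point is the first step: one must chop $[0,T]$ into \emph{finitely many} intervals on each of which Proposition \ref{2.2} gives a constant $<1$. This uses absolute continuity of the $L^{q,1}([0,T])$-norm (which genuinely requires the second Lorentz index to be finite) together with the $T$-independence of the constant in Proposition \ref{2.2}; and it is in Proposition \ref{2.2} itself that the critical balance $\tfrac{2}{q}+\tfrac{d}{p}=2$ forces the use of the Lorentz scale $L^{q,1}$ in place of $L^q$. Everything after the partition is routine conditioning and induction.
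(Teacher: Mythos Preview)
Your proposal is correct and follows essentially the same route as the paper's proof: reduce to $f\ge 0$, use absolute continuity of the $L^{q,1}$-norm (the paper's Remark \ref{appre}) together with the $T$-independence of the constant in Proposition \ref{2.2} to partition $[0,T]$ into finitely many subintervals on which the Khasminskii bound $<1$ holds, and then iterate Lemma \ref{2.1} via the Markov property. One small notational slip: in your ``one interval'' display the bound you actually obtain from Lemma \ref{2.1} applied to $\hat f$ is on $\sup_z\E\,e^{\int_0^{b-a} f(a+u,B^z_u)\,du}$, not on $\sup_z\E\,e^{\int_a^b f(s,B^z_s)\,ds}$; but this is exactly the quantity $h(y)$ you need in the inductive step, so the argument is unaffected.
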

\begin{proof}
Without loss of the generality, we assume that $f\geq 0$. In order to apply  Lemma \ref{2.1}, let us divide the interval $[0,T]$ into  several intervals $[T_{i-1},T_i]$, $0=T_0<T_1<\cdots<T_k<T_{k+1}=T$, such that 
\begin{align*}
\sup_{x\in\R^d}\E\int_0^{T_i-T_{i-1}}f(T_{i-1}+s,B^x_s)ds \leq \alpha
\end{align*}
holds for some $\alpha<1$. This can be done thanks to Proposition \ref{2.2} and Remark \ref{appre}. Applying Lemma \ref{2.1}, we obtain
\begin{align*}
\sup_{x\in\R^d}\E e^{\int^T_0f(s,B_s^x)ds} 
&=\sup_{x\in\R^d}\E e^{\int^{T_1}_0f(s,B_s^x)ds} \dots e^{\int^T_{T_k}f(s,B_s^x)ds} \\
&=\sup_{x\in\R^d}\E \Big[e^{\int^{T_1}_0f(s,B_s^x)ds} \dots e^{\int^{T_k}_{T_{k-1}}f(s,B_s^x)ds}\E(e^{\int^T_{T_k}f(s,B_s^x)ds}|\mathcal{F}_{T_k})\Big] \\
&= \sup_{x\in\R^d}\E \Big[e^{\int^{T_1}_0f(s,B_s^x)ds} \dots e^{\int^{T_k}_{T_{k-1}}f(s,B_s^x)ds} \E e^{\int^{T-T_k}_0 f(T_k+s,B^y_s)ds}|_{y=B_{T_k}^x}\Big] \\
&\leq \frac{1}{1-\alpha}\sup_{x\in\R^d}\E e^{\int^{T_1}_0f(s,B_s^x)ds} \dots e^{\int^{T_k}_{T_{k-1}}f(s,B_s^x)ds} \\
&\leq \dots \\
&\leq (\frac{1}{1-\alpha})^{k+1}.
\end{align*}
\end{proof}
\begin{remark} \label{re2.4}
In \cite{ff3, krylov}, a similar result is  proved for $f\in L^q([0,T],L^p_x)$ with $\frac{2}{q}+\frac{d}{p}<2$. It is also shown that the quantity \eqref{eq2.1} can be controlled by $\norm{f}_{L^{q}([0,T],L^p_x)}$. At the Orlicz-critical regime, one can control the quantity \eqref{eq2.1} in some weak sense. In fact, thanks to Lemma \ref{2.1} and Proposition \ref{2.2}, there exists a constant $K=K(p,q)$ such that the following  holds: there exists a function $C:\R \rightarrow \R$ such that for any $f$ satisfying $\norm{f}_{L^{q,1}([0,T],L^p_x)}<K$,
\begin{align*}
 \sup_{x\in\R^d}\E e^{\int^T_0f(s,B_s^x)ds} \leq C(\norm{f}_{L^{q,1}([0,T],L^p_x)}).
\end{align*} 
This means that for functions $f$ having sufficiently small $\norm{f}_{L^{q,1}([0,T],L^p_x)}$, the quantity \eqref{eq2.1} can be controlled in terms of $\norm{f}_{L^{q,1}([0,T],L^p_x)}$.
\end{remark}
Now, as an application of the Girsanov theorem, one can  derive the existence of a weak solution to SDE \eqref{SDE} under the Orlicz-critical condition \eqref{condition}:
\begin{theorem}\label{2.4}
Suppose that $b$ satisfies the condition \eqref{condition}. Then, SDE  \eqref{SDE} admits a weak solution. More precisely, we can construct processes $X_t$ and $B_t$ for $0\leq t\leq T$ on some filtered space ($\Omega, \mathcal{F}, \mathcal{F}_t, P$) such that $B_t$ is a standard $\mathcal{F}_t$-Brownian motion and almost surely,
\be 
X_t=x+\int_0^tb(s,X_s)ds+B_t
\ee
holds for all $0\leq t\leq T$.
\end{theorem}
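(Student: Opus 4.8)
The plan is to construct a weak solution to \eqref{SDE} by the classical Girsanov transformation, using the exponential integrability established in Proposition \ref{2.3} to guarantee that the relevant exponential martingale is a true martingale. First I would let $(\Omega,\mathcal F,\mathcal F_t,P)$ carry a standard Brownian motion $B^x_t$ started at $x$ and simply \emph{declare} $X_t:=B^x_t$. The claim is then that after a suitable change of measure, $X_t$ solves \eqref{SDE} driven by a Brownian motion under the new measure. Concretely, set
\[
\mathcal E_t := \exp\!\left(\int_0^t b(s,B^x_s)\,dB^x_s - \tfrac12\int_0^t |b(s,B^x_s)|^2\,ds\right),
\]
and define a new probability measure $dQ := \mathcal E_T\, dP$ on $\mathcal F_T$. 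The content of Girsanov's theorem is that, provided $\mathbb E_P[\mathcal E_T]=1$, the process
\[
B_t := B^x_t - x - \int_0^t b(s,B^x_s)\,ds
\]
is a standard $\mathcal F_t$-Brownian motion under $Q$, and then by construction $X_t = x + \int_0^t b(s,X_s)\,ds + B_t$ holds $Q$-a.s.\ for all $t\in[0,T]$; relabelling $Q$ as $P$ and the filtered space accordingly gives exactly the asserted weak solution.

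The one nontrivial point — and the main obstacle — is verifying the martingale property $\mathbb E_P[\mathcal E_T]=1$, equivalently the Novikov-type condition that makes $(\mathcal E_t)$ a genuine (not merely local) martingale. Here is where the Orlicz-critical hypothesis enters. I would first reduce to the case of bounded $b$ by a localization/approximation argument: for $b_n := b\,\mathbf 1_{|b|\le n}$ the exponential $\mathcal E^n_t$ is an honest martingale (Novikov holds trivially for bounded drifts), and one passes to the limit once a uniform-integrability bound on $\{\mathcal E^n_T\}_n$ is available. That uniform integrability follows from a uniform bound on $\mathbb E_P[(\mathcal E^n_T)^{1+\varepsilon}]$ for some $\varepsilon>0$, which in turn reduces — after completing the square and a Cauchy–Schwarz step — to controlling $\sup_{x\in\R^d}\mathbb E_P \exp\!\big(\lambda\int_0^T |b(s,B^x_s)|^2\,ds\big)$ for a fixed $\lambda>0$. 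Now $|b|^2\in L^{q/2,1/2}$-type spaces; more to the point, since $\tfrac{2}{q}+\tfrac{d}{p}=1$ we have $\tfrac{2}{q/2}+\tfrac{d}{p/2}=2$, so $|b(s,\cdot)|^2$ lies in $L^{p/2}_x$ with time-exponent $q/2$ satisfying precisely the hypothesis $\frac{2}{q'}+\frac{d}{p'}=2$ of Proposition \ref{2.3} (with $p'=p/2$, $q'=q/2$, and $|b|^2\in L^{q/2,1/2}_t L^{p/2}_x\subset L^{q/2,1}_tL^{p/2}_x$ up to the usual embedding of Lorentz spaces). Hence Proposition \ref{2.3} applies directly and yields the desired finite exponential moment, uniformly in the starting point $x$.

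Having secured $\mathbb E_P[\mathcal E_T]=1$, the remaining steps are routine: one checks that $\int_0^t b(s,B^x_s)\,dB^x_s$ is well defined (again Proposition \ref{2.2} with $|b|^2$ guarantees $\int_0^T|b(s,B^x_s)|^2\,ds<\infty$ $P$-a.s., so the stochastic integral makes sense), applies the Girsanov theorem to identify $B_t$ as a $Q$-Brownian motion with respect to the original filtration, and verifies that the integral equation holds $Q$-almost surely for every $t\in[0,T]$ by continuity of both sides. This produces the weak solution on the space $(\Omega,\mathcal F,\mathcal F_t,Q)$; renaming $Q$ to $P$ gives the statement of Theorem \ref{2.4}. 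The subtlety worth flagging in the write-up is that the approximation argument needs $\mathcal E^n_T\to\mathcal E_T$ in $L^1(P)$, which the uniform $(1+\varepsilon)$-moment bound delivers via Vitali/de la Vallée-Poussin; no further input beyond Proposition \ref{2.3} is required.
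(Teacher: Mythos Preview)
Your proposal is correct and follows essentially the same route as the paper: start with a Brownian motion, form the Girsanov exponential, verify the Novikov condition via Proposition \ref{2.3} applied to $|b|^2$ (using exactly the observation that $|b|^2\in L^{q/2,1}_t L^{p/2}_x$ with $\tfrac{2}{q/2}+\tfrac{d}{p/2}=2$), and read off the weak solution under the new measure. The only difference is cosmetic: you interpose an approximation $b_n=b\mathbf 1_{|b|\le n}$ and a uniform $(1+\varepsilon)$-moment argument, whereas the paper simply notes that Proposition \ref{2.3} gives $\sup_x\E\exp\bigl(\tfrac12\int_0^T|b(s,B^x_s)|^2\,ds\bigr)<\infty$ directly, so Novikov holds outright and no localization is needed; you can streamline your write-up by dropping that detour.
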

\begin{proof}
Let $X_t$ be a Brownian motion starting from $x$ on the probability space ($\Omega, \mathcal{G}, Q$), equipped with a natural filtration $\mathcal{F}_t$. Then, using  Proposition \ref{2.3}, one can conclude that
\begin{align*}
\alpha_t=\exp\Big[\int_0^t b(s,X_s)dX_s-\frac{1}{2}\int_0^t|b(s,X_s)|^2ds\Big]
\end{align*}
is a $Q$-martingale since the Novikov condition is satisfied.
Thus, a process defined by
\begin{align*}
B_t=X_t-\int_0^t b(s,X_s)ds-x
\end{align*}
is a $\mathcal{F}_t$-Brownian motion starting from the origin with respect to the new probability measure $dP(w)=\alpha_T(w)dQ(w)$ on $\mathcal{F}_T$ due to the Girsanov theorem. 
\end{proof}

\subsection{Associated PDE results} \label{section 3.2}
In this section, we study the following  Kolmogorov PDE:
\be \label{Kolmogorov PDE}
\begin{cases}
u_t - \frac{1}{2} \Delta u+b\cdot \nabla u+f=0, \quad 0\leq t\leq T, \\
u(0,x)=0,
\end{cases}
\ee
 for singular functions $b$ and $f$ in the Orlicz-critical space 
\eqref{condition}. 
This PDE \eqref{Kolmogorov PDE} provides  a key ingredient to prove the strong uniqueness of SDE \eqref{SDE}. 

The PDE \eqref{Kolmogorov PDE} has been extensively studied  when singular coefficients $b$ and $f$ belong to the subcritical Lebesgue space \eqref{KR} (see for example \cite{ff3,krylov,zhang3}). On the other hand, a theory of PDE \eqref{Kolmogorov PDE} with critical coefficients has not been well-established due to the lack of nice  embedding  properties for the mixed-norm parabolic Sobolev spaces at the critical regime. In this section, we obtain the parabolic Sobolev embedding properties under the case when a slightly stronger Lorentz integrability condition is imposed on the time variable (see Proposition \ref{2.7}). From this, we establish the well-posedness result of PDE \eqref{Kolmogorov PDE} with singular coefficients in the Orlicz-critical  spaces \eqref{condition}, and then obtain a priori estimate of a solution.

For $1<p,q<\infty$, and $S\leq T$, let us define a function space $X^{q,p}([S,T])$ to be a collection of functions satisfying
\begin{align*}
u, u_t, \nabla u, \nabla^2 u\in L^{q,1}([S,T],L^p_x).
\end{align*}
Note that derivatives are interpreted as a distribution sense. Its norm is defined by
\begin{multline*}
\norm{u}_{X^{q,p}([S,T])} \\
:=\norm{u}_{L^{q,1}([S,T],L^p_x)}+\norm{u_t}_{L^{q,1}([S,T],L^p_x)}+\norm{\nabla u}_{L^{q,1}([S,T],L^p_x)}+\norm{\nabla^2 u}_{L^{q,1}([S,T],L^p_x)}.
\end{multline*} 
One can easily check that $X^{q,p}([S,T])$ is a quasi-Banach space. The main result of this section is the following theorem, which establishes the well-posedness of PDE \eqref{Kolmogorov PDE} and a priori estimate of a solution: 
\begin{theorem} \label{2.5}
Assume that $b$ satisfies \eqref{condition}. 
Then, there exists $T_0\leq T$ satisfying the following properties: for any $f \in L^{q,1}([0,T_0],L^p_x)$,  there exists a unique solution $u\in X^{q,p}([0,T_0])$ to \eqref{Kolmogorov PDE} for $0\leq t\leq  T_0$, and the estimate 
\be \label{a priori}
\norm{u}_{X^{q,p}([0,T_0])}\leq C\norm{f}_{L^{q,1}([0,T_0],L^p_x)}
\ee
holds for some constant $C$ depending only on $\norm{b}_{L^{q,1}([0,T_0],L^p_x)}$.
\end{theorem}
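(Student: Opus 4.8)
The plan is to treat \eqref{Kolmogorov PDE} as a perturbation of the standard heat equation and to run a fixed-point argument in the space $X^{q,p}([0,T_0])$. The first step is to establish the \emph{maximal regularity estimate for the heat equation} in the Lorentz-in-time scale: if $v_t-\tfrac12\Delta v = g$ with $v(0,\cdot)=0$ and $g\in L^{q,1}([0,T],L^p_x)$, then $v\in X^{q,p}([0,T])$ with $\norm{v}_{X^{q,p}([0,T])}\le C\norm{g}_{L^{q,1}([0,T],L^p_x)}$, the constant $C$ being independent of $T$. This follows from the classical $L^q_tL^p_x$ parabolic estimates of Ladyzhenskaya--Solonnikov--Ural'tseva together with real interpolation: the solution operator $g\mapsto (\partial_t v,\nabla^2 v)$ is bounded on $L^{q_0}_tL^p_x$ for every $q_0\in(1,\infty)$, hence by the reiteration/interpolation theorem it is bounded on the real-interpolation space $L^{q,1}_tL^p_x=[L^{q_0}_tL^p_x,L^{q_1}_tL^p_x]_{\theta,1}$ for suitable $q_0<q<q_1$. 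The lower-order terms $v$ and $\nabla v$ are then controlled by the Gagliardo--Nirenberg / parabolic Sobolev embedding of Proposition~\ref{2.7}, which is exactly where the Orlicz-critical exponent relation $\frac2q+\frac dp=1$ is used; on a finite interval one also needs a zero-order bound for $v$ itself, obtained by Duhamel and the boundedness of the heat semigroup.

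The second step is the \textbf{contraction argument}. Given $u_0\in X^{q,p}([0,T_0])$, define $\Phi(u)$ to be the solution of $w_t-\tfrac12\Delta w = -\,b\cdot\nabla u - f$, $w(0,\cdot)=0$. By Step~1, $\norm{\Phi(u)}_{X^{q,p}}\le C\bigl(\norm{b\cdot\nabla u}_{L^{q,1}_tL^p_x}+\norm{f}_{L^{q,1}_tL^p_x}\bigr)$. The key estimate is
\[
\norm{b\cdot\nabla u}_{L^{q,1}([0,T_0],L^p_x)} \le \varepsilon(T_0)\,\norm{u}_{X^{q,p}([0,T_0])},
\]
with $\varepsilon(T_0)\to 0$ as $T_0\to 0$. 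To get this, one applies H\"older in $x$ to write $\norm{b(s,\cdot)\nabla u(s,\cdot)}_{L^p_x}\le \norm{b(s,\cdot)}_{L^{p_1}_x}\norm{\nabla u(s,\cdot)}_{L^{p_2}_x}$ with $\frac1{p_1}+\frac1{p_2}=\frac1p$, and then controls $\norm{\nabla u(s,\cdot)}_{L^{p_2}_x}$ by interpolating $\nabla u$ between $L^p_x$ and $\nabla^2u\in L^p_x$ (so $\nabla u\in W^{1,p}_x\hookrightarrow L^{p_2}_x$), paying a fractional power of $s$ or of the interval length and using H\"older for Lorentz spaces in the time variable together with $\frac2q+\frac dp=1$. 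The smallness of $\varepsilon(T_0)$ comes from the absolute continuity of the Lorentz norm $\norm{b}_{L^{q,1}([0,T_0],L^p_x)}\to 0$ as $T_0\to0$ — this is the role of $L^{q,1}$ rather than $L^q$, since the $L^{q,1}$-norm over a shrinking interval does tend to zero, which is precisely the gain that fails at the Lebesgue-critical regime. Choosing $T_0$ so that $C\,\varepsilon(T_0)\le \tfrac12$ makes $\Phi$ a contraction on the ball $\{\norm{u}_{X^{q,p}}\le 2C\norm{f}_{L^{q,1}_tL^p_x}\}$; its fixed point is the unique solution, and the a priori bound \eqref{a priori} is read off from the fixed-point inequality. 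Uniqueness in all of $X^{q,p}([0,T_0])$ follows because the difference of two solutions solves the homogeneous equation and the same contraction estimate forces it to vanish.

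The \textbf{main obstacle} is Step~1 combined with the mixed-norm embedding: at the critical exponent $\frac2q+\frac dp=1$ the naive parabolic Sobolev embedding $X^{q,p}\hookrightarrow L^\infty_tL^\infty_x$ (or the needed $L^{q}_tL^{p_2}_x$ bound on $\nabla u$) sits exactly at an endpoint, so in the Lebesgue scale one loses; the paper circumvents this via O'Neil's convolution inequality for mixed-norm Lorentz spaces (Propositions~\ref{2.7}, \ref{A.3}), which upgrades the borderline Sobolev embedding to a bounded map on the $L^{q,1}$-scale. Getting the heat-equation maximal regularity and this embedding to hold \emph{simultaneously} with constants independent of $T_0$, so that the only $T_0$-dependence is the benign smallness of $\norm{b}_{L^{q,1}([0,T_0],L^p_x)}$, is the delicate point; once that is in place the fixed-point scheme is routine. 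A secondary technical point is that $X^{q,p}$ is only a quasi-Banach space, so one should either verify that the quasi-norm is equivalent to a norm (true here since $L^{q,1}$ with $q>1$ is normable) or run the contraction mapping principle in the quasi-Banach setting, which is standard.
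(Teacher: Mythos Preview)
Your overall architecture---maximal regularity for the heat equation in $L^{q,1}_tL^p_x$ via real interpolation, followed by a contraction in $X^{q,p}([0,T_0])$ with smallness coming from $\norm{b}_{L^{q,1}([0,T_0],L^p_x)}\to 0$---is exactly the paper's proof. The quasi-Banach issue is also handled the same way (the paper invokes a fixed-point lemma for quasi-Banach spaces, Proposition~\ref{A.4}).

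There is, however, a genuine gap in your Step~2. Your proposed bound on $\norm{b\cdot\nabla u}_{L^{q,1}_tL^p_x}$ via a H\"older split $\norm{b}_{L^{p_1}_x}\norm{\nabla u}_{L^{p_2}_x}$ with $p_1,p_2<\infty$ and ``paying a fractional power of $s$'' cannot work here: since we only have $b(s,\cdot)\in L^p_x$, the split forces $p_1=p$, $p_2=\infty$, and at the critical exponent there is \emph{no} leftover fractional power of $s$ or of $T_0$---that gain is precisely what distinguishes the subcritical from the critical regime. A pointwise-in-time spatial Sobolev embedding $W^{1,p}_x\hookrightarrow L^\infty_x$ (valid since $p>d$) only yields $\norm{\nabla u(s,\cdot)}_{L^\infty_x}\in L^{q,1}_t$, not $L^\infty_t$, so the time H\"older step fails. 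The paper's route is more direct: Proposition~\ref{2.7} gives the \emph{parabolic} embedding $\norm{\nabla u}_{L^\infty_{t,x}}\le C\norm{u}_{X^{q,p}}$ (using the heat kernel representation and O'Neil's inequality), and then simply
\[
\norm{b\cdot\nabla u}_{L^{q,1}([0,T_0],L^p_x)}\le \norm{\nabla u}_{L^\infty_{t,x}}\,\norm{b}_{L^{q,1}([0,T_0],L^p_x)}\le C\,\norm{b}_{L^{q,1}([0,T_0],L^p_x)}\,\norm{u}_{X^{q,p}}.
\]
You do cite Proposition~\ref{2.7} in your Step~1 and in the closing paragraph, but it belongs at the heart of Step~2, not as an afterthought.

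Relatedly, your diagnosis of \emph{where} the Lorentz condition enters is off. You write that the smallness of $\varepsilon(T_0)$ ``is the role of $L^{q,1}$ rather than $L^q$, since the $L^{q,1}$-norm over a shrinking interval does tend to zero, which is precisely the gain that fails at the Lebesgue-critical regime.'' But the $L^q$ norm over a shrinking interval also tends to zero for $q<\infty$; absolute continuity is not the issue. The genuine role of $L^{q,1}$ is in Proposition~\ref{2.7}: at the critical scaling $\tfrac2q+\tfrac dp=1$ the embedding $X^{q,p}\hookrightarrow \{\nabla u\in L^\infty_{t,x}\}$ fails in the Lebesgue scale but holds in the Lorentz scale, because O'Neil's inequality (Proposition~\ref{A.3}) pairs $L^{q,1}_tL^p_x$ against the heat-kernel gradient, which lies only in $L^{q',\infty}_tL^{p'}_x$. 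That is the single place where $L^{q,1}$ is indispensable.
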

The first step to establish this theorem is to obtain an a priori estimate for the $L^{q,1}([0,T],L^p_x)$-norm of the following heat equation:
\be \label{heat}
\begin{cases}
u_t-\frac{1}{2}\Delta u=f, \quad 0\leq t\leq T, \\
u_0=0.
\end{cases}
\ee 

\begin{proposition} \label{2.6}
For any $p,q\in (1,\infty)$ and $f\in L^{q,1}([0,T],L^p_x)$, there exists a unique solution $u\in X^{q,p}([0,T])$ to PDE  \eqref{heat}. Also, there exists  some constant $C=C(p,q)$ independent of $T$ such that for any $f\in L^{q,1}([0,T],L^p_x)$, 
\be  \label{estimate}
\norm{\nabla^2 u}_{L^{q,1}([0,T],L^p_x)} \leq C\norm{f}_{L^{q,1}([0,T],L^p_x)},
\ee 
\be \label{estimate1}
\norm{u}_{X^{q,p}([0,T])} \leq C\max\{1,T\} \norm{f}_{L^{q,1}([0,T],L^p_x)}.
\ee 

\end{proposition}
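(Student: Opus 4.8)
The plan is to reduce the estimate to the classical maximal $L^{r}_tL^p_x$-regularity of the heat equation and then recover the sharper Lorentz integrability in time by a real-interpolation argument; the two lower-order terms and the factor $\max\{1,T\}$ are afterwards obtained from elementary semigroup bounds.

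First I would extend $f$ by zero to $[0,\infty)$ and write the solution through Duhamel's formula, $u(t)=\int_0^t e^{(t-s)\Delta/2}f(s)\,ds$ (this is harmless, since only the values of $f$ on $[0,t]$ enter). The map $\mathcal{T}\colon f\mapsto\nabla^2u$ is linear and commutes with translations in time and with the parabolic rescaling $(t,x)\mapsto(\lambda^2t,\lambda x)$. By classical parabolic maximal regularity (for instance via the Mikhlin--H\"ormander multiplier theorem, or Calder\'on--Zygmund theory for the parabolic half-space problem), $\mathcal{T}$ is bounded on $L^{r}([0,\infty),L^p(\R^d))$ for every $r\in(1,\infty)$, and the scaling and translation invariance force the operator norm to be a constant $C(r,p)$ independent of $T$.

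To obtain \eqref{estimate}, fix $q\in(1,\infty)$ and choose $1<q_0<q<q_1<\infty$. Real interpolation of $L^p_x$-valued Lebesgue spaces gives $\big(L^{q_0}([0,\infty),L^p_x),L^{q_1}([0,\infty),L^p_x)\big)_{\theta,1}=L^{q,1}([0,\infty),L^p_x)$ with $\tfrac1q=\tfrac{1-\theta}{q_0}+\tfrac{\theta}{q_1}$, so $\mathcal{T}$ is bounded on $L^{q,1}([0,\infty),L^p_x)$ with norm $C(p,q)$; restricting $\nabla^2u$ back to $[0,T]$ (which does not increase the $L^{q,1}$-norm) yields $\norm{\nabla^2u}_{L^{q,1}([0,T],L^p_x)}\le C(p,q)\norm{f}_{L^{q,1}([0,T],L^p_x)}$. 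For \eqref{estimate1} the remaining three norms are handled directly. The equation $u_t=\tfrac12\Delta u+f$ gives $\norm{u_t}_{L^{q,1}_tL^p_x}\lesssim\norm{\nabla^2u}_{L^{q,1}_tL^p_x}+\norm{f}_{L^{q,1}_tL^p_x}\lesssim\norm{f}_{L^{q,1}_tL^p_x}$. The $L^p_x$-contractivity of the heat semigroup gives $\norm{u(t)}_{L^p_x}\le\norm{f}_{L^1([0,T],L^p_x)}$, hence by H\"older's inequality for Lorentz spaces in time $\norm{u}_{L^{q,1}_tL^p_x}\le\norm{1}_{L^{q,1}([0,T])}\norm{u}_{L^\infty_tL^p_x}\lesssim T^{1/q}T^{1/q'}\norm{f}_{L^{q,1}_tL^p_x}=T\norm{f}_{L^{q,1}_tL^p_x}$. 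For the gradient I would use the interpolation inequality $\norm{\nabla g}_{L^p_x}\le C\norm{g}_{L^p_x}^{1/2}\norm{\nabla^2g}_{L^p_x}^{1/2}$ pointwise in $t$, together with the identity $\norm{h^{1/2}}_{L^{2q,2}}=\norm{h}_{L^{q,1}}^{1/2}$ and H\"older for Lorentz spaces ($\tfrac1q=\tfrac1{2q}+\tfrac1{2q}$, $1=\tfrac12+\tfrac12$), which gives $\norm{\nabla u}_{L^{q,1}_tL^p_x}\le C\norm{u}_{L^{q,1}_tL^p_x}^{1/2}\norm{\nabla^2u}_{L^{q,1}_tL^p_x}^{1/2}\lesssim T^{1/2}\norm{f}_{L^{q,1}_tL^p_x}$. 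Adding the four terms bounds $\norm{u}_{X^{q,p}([0,T])}$ by $C(1+T^{1/2}+T)\norm{f}_{L^{q,1}_tL^p_x}\lesssim\max\{1,T\}\norm{f}_{L^{q,1}_tL^p_x}$. Existence of a solution in $X^{q,p}([0,T])$ then follows by approximating $f$ in $L^{q,1}_tL^p_x$ by smooth compactly supported data, solving classically, and passing to the limit with these a priori bounds; uniqueness follows because any $u\in X^{q,p}([0,T])$ solving the homogeneous problem with zero initial data lies in $C([0,T],L^p_x)$ and, by the Duhamel representation (or a Fourier transform in $x$), must vanish.

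I expect the interpolation step to be the only genuinely delicate point: one cannot run a Calder\'on--Zygmund argument directly in $L^{q,1}_tL^p_x$, since $L^{q,1}$ falls outside the natural range for singular-integral bounds, so the time integrability has to be produced purely by interpolating two ordinary $L^{r}_tL^p_x$ maximal-regularity estimates. This is legitimate only because the endpoint constants are uniform in $T$, which rests on the parabolic scaling invariance of $\mathcal{T}$; the rest of the argument is soft.
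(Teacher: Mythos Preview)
Your proof is correct and follows the same route as the paper: the key estimate \eqref{estimate} is obtained by real interpolation between two classical $L^{q_i}_tL^p_x$ maximal-regularity bounds for the heat equation (the paper cites Krylov's result for these), and the remaining three norms in \eqref{estimate1} are then handled by the equation itself, a direct bound on $\|u\|$, and an interpolation inequality for $\|\nabla u\|$. The only cosmetic differences are that the paper bounds $\|u\|$ via $u(t,x)\le\int_0^T|u_t(s,x)|\,ds$ rather than heat-semigroup contractivity, and uses the additive form $\|\nabla u\|_{L^p}\lesssim\|u\|_{L^p}+\|\nabla^2u\|_{L^p}$ instead of your multiplicative Gagliardo--Nirenberg inequality.
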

\begin{proof}
Let us first prove the estimate \eqref{estimate}. For $f\in C^\infty_c([0,T]\times \R^d)$, let us define $u(t)=\int_0^t T_{t-s}f(s)ds$, where $T_t$ denotes the semigroup generated by $\frac{1}{2}\Delta$. Obviously, $u$ is a classical solution to the heat equation \eqref{heat}. 
According to \cite[Theorem 1.2]{krylov2}, for any $p,q\in (1,\infty)$, there exists some constant $C=C(p,q)$ independent of $T$ such that for any  $f\in L^q([0,T],L^p_x)$,
\begin{align*}
\norm{\nabla^2 u}_{L^{q}([0,T],L^p_x)} \leq C\norm{f}_{L^{q}([0,T],L^p_x)}.
\end{align*} 
 Since $L^{q,1}_t(L^p_x)$ can be realized as a real interpolation space of two mixed-norm Lebesgue spaces: for $0<\theta<1$ satisfying $\frac{1}{q}=\frac{1-\theta}{q_1}+\frac{\theta}{q_2}$,
\begin{align*}
[L^{q_1}([0,T],L^p_x), L^{q_2}([0,T],L^p_x)]_{\theta, 1}=L^{q,1}([0,T],L^p_x),
\end{align*}
 we obtain the estimate \eqref{estimate} (see \cite{interpolation} for the details of interpolation spaces). Also, using the equation \eqref{heat} and the estimate \eqref{estimate}, for some constant $C_1=C_1(p,q)$ independent of $T$,
\begin{align*}
\norm{u_t}_{L^{q,1}([0,T],L^p_x)} \leq C_1\norm{f}_{L^{q,1}([0,T],L^p_x)}.
\end{align*}
 Using the Minkowski's integral inequality, H\"older's inequality and the trivial inequality $u(t,x)\leq \int_0^T |u_t(s,x)|ds$, it follows that for some constant $C_2=C_2(p,q)$ independent of $T$,
\begin{align*}
\norm{u}_{L^{q,1}([0,T],L^p_x)}\leq C_2T\norm{f}_{L^{q,1}([0,T],L^p_x)}.
\end{align*} 
 Furthermore, using the interpolation inequality $\norm{\nabla u}_{L^p_x} \lesssim \norm{u}_{L^p_x}+\norm{\nabla^2 u}_{L^p_x}$ and the aforementioned results, we readily obtain \eqref{estimate1}. 
 
 The existence of a solution $u\in X^{q,p}([0,T])$ to the heat equation \eqref{heat} can be established via a standard approximation argument and the  estimate \eqref{estimate1}. Uniqueness immediately follows from the estimate \eqref{estimate1}.
\end{proof}

In order to obtain an a priori estimate \eqref{a priori} for the PDE \eqref{Kolmogorov PDE} using the result in Proposition \ref{2.6}, we need to handle the first order term $\norm{b\cdot \nabla u}_{L^{q,1}([0,T],L^p_x)}$. Since $b\in L^{q,1}([0,T],L^p_x)$, this term can be controlled  once we are able to control $\norm{\nabla u}_{L^\infty([0,T]\times \R^d)}$. The embedding theorems for the mixed-norm parabolic Sobolev spaces are obtained in \cite[Lemma 10.2]{krylov}: $\nabla u$ is bounded and H\"older continuous in $(t,x)$ provided that
\begin{align*}
u_t, \nabla^2 u\in L^q([0,T],L^p_x)
\end{align*} 
for $1<p,q<\infty$ satisfying the subcritical condition $\frac{2}{q}+\frac{d}{p}<1$. However, in general, $\nabla u$  may not be bounded under the critical condition $\frac{2}{q}+\frac{d}{p}=1$: recall that the Sobolev embedding $W^{1,d}(\R^d) \hookrightarrow L^\infty(\R^d)$ fails at the critical regime.  Remarkably,  when a slightly stronger Lorentz integrability condition is imposed on the time variable, the boundedness of $\nabla u$ can be established at the critical regime $\frac{2}{q}+\frac{d}{p}=1$:
\bepro \label{2.7}
Suppose that $u\in X^{q,p}([0,T])$ with  $u(0)=0$, and the exponents $1<p, q<\infty$ satisfy the condition:
\begin{align*}
\frac{2}{q}+\frac{d}{p}=1.
\end{align*} 
Then $\nabla u\in L^\infty([0,T]\times \R^d)$. Also, there exists some constant $C=C(p,q)$ independent of $T$  such that for any $u\in X^{q,p}([0,T])$,
\be \label{embedding}
\norm{\nabla u}_{L^\infty([0,T]\times \R^d)} \leq C (\norm{u_t}_{L^{q,1}([0,T],L^p_x)}+\norm{\nabla^2 u}_{L^{q,1}([0,T],L^p_x)}).
\ee
\enpro
\begin{proof}
Let us define $f:=u_t-\Delta u$. One can represent $\nabla u$ in terms of the heat kernel:
\begin{align*}
\nabla u(t,x)= \int_0^t \int_{\R^d} \nabla (\frac{1}{s^{d/2}}e^{-|y|^2/4s})\cdot f(t-s,x-y)dyds.
\end{align*}
If we denote $p',q'$ by the conjugate exponents of $p,q$, respectively, then according to  Proposition \ref{A.2}, we have
\begin{align*}
\nabla (\frac{1}{t^{d/2}}e^{-|x|^2/4t})\in L^{q',\infty}([0,T],L^{p'}_x).
\end{align*}
Thus, using the O'Neil's inequality for the mixed-norm Lorentz spaces (Proposition \ref{A.3}), 
\begin{align*}
\norm{\nabla u}_{L^\infty([0,T]\times \R^d)} &\leq C \norm{\nabla (\frac{1}{t^{d/2}}e^{-|x|^2/4t})}_{L^{q',\infty}([0,T],L^{p'}_x)}\norm{f}_{L^{q,1}([0,T],L^p_x)}\\
 &\leq  C(p,q)(\norm{u_t}_{L^{q,1}([0,T],L^p_x)}+\norm{\nabla^2 u}_{L^{q,1}([0,T],L^p_x)}).
\end{align*}
Note that the estimate Proposition \ref{A.3} is global in time, whereas the above inequality is integrated only over $[0,T]$. This subtle problem can be easily overcome by extending two functions $g(s,y)=\nabla (\frac{1}{s^{d/2}}e^{-|y|^2/4s})$ and $f(s,y)$ to the whole real line by setting $f,g=0$ outside $ [0,T]$. 
\end{proof}
\begin{remark} \label{re2.9}
In \cite{parabolic},  parabolic Riesz potentials are studied in the context of the mixed-norm spaces. If we denote $p(t,x)$ by the standard heat kernel, then the operator defined by 
\begin{align*}
p* f(t,x):= \int_0^\infty \int_{\R^d} p(s,y)f(t-s,x-y)dyds
\end{align*}
is bounded from $L^{q_1}(\R,L^{p_1}_x)$ to $L^{q_2}(\R,L^{p_2}_x)$ for  $1\leq p_1<p_2<\infty$ and $1\leq q_1<q_2<\infty$ satisfying $1=\frac{d}{2}(\frac{1}{p_1}-\frac{1}{p_2})+(\frac{1}{q_1}-\frac{1}{q_2})$. Note that this result does not include the endpoint case $p_2=q_2=\infty$. However, one can cover the endpoint case Proposition \ref{2.7}, at the price that a slightly stronger Lorentz norm shows up in the right hand side of  \eqref{embedding}.
\end{remark}
Now, we are ready to study the Kolmogorov PDE \eqref{Kolmogorov PDE}. 
\begin{proof}[Proof of Theorem \ref{2.5}]
We use a fixed point theorem for the quasi-Banach spaces (see Proposition \ref{A.4}) to prove the existence of a solution. For $u \in X^{q,p}([0,T])$, we have $\nabla u\in L^\infty([0,T]\times \R^d)$ according to Proposition \ref{2.7}. Therefore, for $b,f\in L^{q,1}([0,T],L^p_x)$, we have $f+b\cdot \nabla u\in L^{q,1}([0,T],L^p_x)$. Using Proposition \ref{2.6}, let us define $w=F(u)\in X^{q,p}([0,T])$ to be a unique solution of the following PDE:
\begin{align*}
\begin{cases}
w_t-\frac{1}{2}\Delta w=-(f+b\cdot \nabla u), \quad 0\leq t\leq T, \\
w(0,x)=0.
\end{cases}
\end{align*} 
Using the estimates \eqref{estimate1} and \eqref{embedding}, for some constants $C,C_1$ independent of $T$,
\begin{align*}
\norm{F(u_1)-F(u_2)}_{X^{q,p}([0,T])}  &\leq C\max\{1,T\}\norm{b\cdot \nabla(u_1-u_2)}_{L^{q,1}([0,T],L^p_x)} \\
&\leq C\max\{1,T\}\norm{b}_{L^{q,1}([0,T],L^p_x)}\cdot \norm{\nabla (u_1-u_2)}_{L^\infty([0,T]\times \R^d)} \\
&\leq C_1\max\{1,T\}\norm{b}_{L^{q,1}([0,T],L^p_x)}\cdot \norm{(u_1-u_2)}_{X^{q,p}([0,T])}.
\end{align*}
Let us denote  $c=c(q,1)>1$ by a constant from \eqref{quasi}, and choose a sufficiently small $T_0$ satisfying
\begin{align*}
\norm{b}_{L^{q,1}([0,T_0],L^p_x)}<\frac{1}{2cC_1\max\{1,T_0\}}
\end{align*}
(see Remark \ref{appre} for its validity). Then, a map $F:X^{q,p}([0,T_0])\rightarrow X^{q,p}([0,T_0])$ satisfies
\begin{align*}
|F(x)-F(y)|<\frac{1}{2c}|x-y|.
\end{align*} 
Therefore, applying a fixed point theorem for the quasi-Banach spaces (see Proposition \ref{A.4}), there exists $u\in X^{q,p}([0,T_0])$ satisfying  PDE \eqref{Kolmogorov PDE} for $0\leq t\leq T_0$. 

Now, let us prove the estimate \eqref{a priori}. Using \eqref{estimate1} and \eqref{quasi}, for some constants $C,C_1$,
\begin{align*}
\norm{u}_{X^{q,p}([0,T_0])} &\leq C\max\{1,T_0\}\norm{f+b\cdot \nabla u}_{L^{q,1}([0,T_0],L^p_x)} \\
&\leq C_1\max\{1,T_0\}(\norm{f}_{L^{q,1}([0,T_0],L^p_x)}+\norm{b}_{L^{q,1}([0,T_0],L^p_x)}\norm{u}_{X^{q,p}([0,T_0])}).
\end{align*}
Therefore, for sufficiently small $T_0$ satisfying
\be \label{12}
\norm{b}_{L^{q,1}([0,T_0],L^p_x)}<\frac{1}{C_1\max\{1,T_0\}},
\ee
 we obtain the estimate \eqref{a priori}. Note that a constant $C$ in \eqref{a priori} can be chosen depending only on $\norm{b}_{L^{q,1}([0,T_0],L^p_x)}$. 
\end{proof}
\begin{remark} \label{stable}
From the proof of Theorem \ref{2.5}, one can check that for any $b$ with sufficiently small $\norm{b}_{L^{q,1}([0,T],L^p_x)}$, there exists a unique solution $u$ to PDE \eqref{Kolmogorov PDE} for $0\leq t\leq T$ satisfying:
\begin{align*}
\norm{u}_{X^{q,p}([0,T])}\leq C(\norm{b}_{L^{q,1}([0,T],L^p_x)},p,q)\norm{f}_{L^{q,1}([0,T],L^p_x)}.
\end{align*} For these $b$'s, one can easily derive a stability property of PDE \eqref{Kolmogorov PDE}. More precisely, there exist a constant $C_0$ depending on $T$ satisfying the following statement: for any $b_i$ and $f_i$, $i=1,2$, satisfying 
\begin{align*}
\norm{f_i}_{L^{q,1}([0,T],L^p_x)}, \norm{b_i}_{L^{q,1}([0,T],L^p_x)}<C_0,
\end{align*}
 define $u_i$ to be a solution to PDE \eqref{Kolmogorov PDE} with $b_i$ and $f_i$ in place of $b$ and $f$, respectively. Then,   for some constant $\bar{C}>1$ depending on $C_0$,
\begin{align} 
\norm{u_1-u_2}_{X^{q,p}([0,T])}, \norm{u_1-u_2}_{L^\infty([0,T]\times \R^d)}, \norm{\nabla(u_1-u_2)}_{L^\infty([0,T]\times \R^d)} \nonumber \\
 \leq \frac{\bar{C}}{2}(\norm{b_1-b_2}_{L^{q,1}([0,T],L^p_x)}+\norm{f_1-f_2}_{L^{q,1}([0,T],L^p_x)}).  \label{stability estimate 1}
\end{align} 
 In particular, when $f_i=b_i$, the RHS of \eqref{stability estimate 1} can be written as $\bar{C}\norm{b_1-b_2}_{L^{q,1}([0,T],L^p_x)}$.
\end{remark}

Assume that $b$ satisfies \eqref{condition}, and $T_0$ is from Theorem \ref{2.5}. According to Theorem \ref{2.5}, there exists a unique solution $\tilde{u}\in X^{q,p}([0,T_0])$ to the following PDE: 
\be \label{eq2.9}
\begin{cases}
u_t + \frac{1}{2} \Delta u+b\cdot \nabla u+b=0, \quad 0\leq t\leq T_0, & \\
u(T_0,x)=0.
\end{cases}
\ee
The following proposition plays an essential role in Section \ref{section 3.3}.
\bepro \label{2.8}
There exists a sufficiently small $T_1$ such that the following holds: if $\tilde{u}$ is a solution to \eqref{eq2.9} with $T_1$ in place of   $T_0$, then there exists a version $u$ of $\tilde{u}$, which is continuous in $(t,x)$, such that $\Phi(t,x):=x+u(t,x)$ satisfies the following conditions: \\
(i) $\Phi(t,\cdot)$ is a $C^1$ diffeomorphism from $\R^d$ to itself for each $0\leq t\leq T_1$.\\
(ii) For each $0\leq t\leq T_1$,
\begin{align*}
\frac{1}{2}\leq \norm{\nabla \Phi(t,\cdot)}_{L^\infty(\R^d)}\leq 2, \quad \frac{1}{2}\leq \norm{\nabla \Phi^{-1}(t,\cdot)}_{L^\infty(\R^d)}\leq 2.
\end{align*} 
\enpro
Here, we say $u_1$ is a version of $u_2$ if $u_1=u_2$ for $(t,x)$-a.e.
\begin{proof}
Let us first prove that there exist a version $u$ of $\tilde{u}$ which is $C^1$ in $x$. Choose a smooth approximation $u_n$ of $\tilde{u}$ in $X^{q,p}([0,T_0])$ norm. Thanks to Proposition \ref{2.7}, 
\begin{align*}
\norm{\nabla (u_n-u_m)}_{L^\infty_{t,x}([0,T_0]\times \R^d)} \leq C \norm{u_n-u_m}_{X^{q,p}([0,T_0])}.
\end{align*}
Therefore, $\nabla u_n$ converge uniformly to some continuous function $w$. Since $u_n$ converge uniformly to some continuous function $u$ which is a version of $\tilde{u}$,  $u$ is differentiable in $x$ and its spatial derivative is $w$. Since $w$ is continuous, $u$ is $C^1$ in $x$.

Now, let us show that for sufficiently small $T_1$, $\nabla \Phi(t,x)$ is non-singular for each $0\leq t\leq T_1$. Note that using the estimates \eqref{estimate1} and \eqref{embedding}, for some constants $C,C_1,C_2$ independent of $T$,
\begin{align*}
\norm{\nabla u}_{L^\infty([0,T]\times \R^d)} &\leq C\norm{u}_{X^{q,p}([0,T])} \leq C_1\max\{1,T\}\norm{b\cdot \nabla u+b}_{L^{q,1}([0,T],L^p_x)} \\
&\leq C_2\max\{1,T\}(\norm{b}_{L^{q,1}([0,T],L^p_x)}\norm{\nabla u}_{L^\infty([0,T]\times \R^d)}+\norm{b}_{L^{q,1}([0,T],L^p_x)}).
\end{align*}
Therefore, if we choose sufficiently small $T_1$ so that $\norm{b}_{L^{q,1}([0,T_1],L^p_x)}$ is small enough, then
\begin{align*}
\norm{\nabla u}_{L^\infty([0,T_1]\times \R^d)} \leq \frac{1}{2}.
\end{align*}
This immediately implies the first inequality in the condition (ii). From this, we obtain the non-singularity of $\nabla \Phi(t,\cdot)$,  and
$\lim_{|x|\rightarrow \infty}|\Phi(t,x)|=\infty$ for each $t\in [0,T_1]$.  Therefore, according to the Hadamard's Lemma (see Proposition \ref{A.7}), $\Phi(t,\cdot)$ is a global diffeomorphism for each $t\in [0,T_1]$, which concludes the proof of the first property.  

The second inequality in (ii)  follows from the identity
\begin{align*}
\nabla \Phi^{-1}(t,x)=[\nabla \Phi(t,\Phi^{-1}(t,x))]^{-1}=[I+\nabla u(t,\Phi^{-1}(t,x))]^{-1},
\end{align*} 
and the fact $\sup_{t\in [0,T_1]}\norm{\nabla u}_{L^\infty(\R^d)}\leq \frac{1}{2}$.
\end{proof}

\begin{remark} \label{re2.12}
In \cite{ff3,f}, authors considered the following PDE with a potential $\lambda u$ ($\lambda>0$):
\begin{align*}
u_t+\frac{1}{2}\Delta u-b\cdot \nabla u -\lambda u=b
\end{align*}
in order to obtain a global bijectivity of the map $\Phi(t,\cdot)$. They proved that for sufficiently large $\lambda$, $\norm{\nabla u}_{L^\infty([0,T]\times \R^d)}<\frac{1}{2}$. However, this method is not applicable in our case due to the critical nature of the exponents $p$ and $q$. Instead, we accomplished this by taking the time  $T$ sufficiently small.
\end{remark}
From now on, we use the notations $u(t,x)$, $\Phi(t,x)$, and $T_1$ from  Proposition \ref{2.8}. 
\subsection{Uniqueness of a strong solution to SDE} \label{section 3.3}
In this section, we prove the uniqueness of a  strong solution to SDE \eqref{SDE} up to time $T_1$. The following proposition claims that a strong solution to \eqref{SDE} yields a new strong solution to the auxiliary SDE which contains no drift terms. It is called the Zvonkin's transformation method  \cite{zvo}. 
\bepro \label{2.9}
Suppose that $b$ satisfies \eqref{condition}, and $X_t$ is a strong solution to SDE  \eqref{SDE} up to time $T_1$. Then, $Y_t$ defined by $Y_t=\Phi(t,X_t)$ is a strong solution to the following SDE:
\be\label{conjugated}
\begin{cases}
dY_t=\tilde{\sigma}(t,Y_t)dB_t, \quad 0\leq t\leq T_1, \\
Y_0=\Phi(0,x)=y,
\end{cases}
\ee
for  $\tilde{\sigma}$ defined by
\be
\tilde{\sigma}(t,x)=I+\nabla u(t,\Phi^{-1}(t,x)).
\ee
\enpro
\begin{proof}
One can check that the standard It\^o's formula 
\begin{align*}
f(t,X_t)-f(0,X_0)=\int_0^t (f_t+b\nabla f+\frac{1}{2}\Delta f)(s,X_s)ds + \int_0^t \nabla f (s,X_s)dB_s
\end{align*} 
holds for any functions $f \in X^{q,p}([0,T])$ with $p,q$ satisfying $\frac{2}{q}+\frac{d}{p}=1$.
In fact, the proof in \cite[Theorem 3.7]{krylov} applies to our case without any changes. Thus, applying It\^o's formula to a function $u$, we have
\begin{align*}
u(t,X_t)&=u(0,X_0)+\int_0^t (u_t+b\cdot \nabla u+\frac{1}{2}\Delta u)(s,X_s)ds+\int_0^t \nabla u(s,X_s)dB_s \\
&= u(0,X_0)-\int_0^t b(s,X_s)ds+\int_0^t \nabla u(s,X_s)dB_s \\
&=u(0,X_0)-X_t+X_0+B_t+\int_0^t \nabla u(s,X_s)dB_s.
\end{align*}
Therefore, we obtain
\begin{align*}
Y_t-Y_0=\Phi(t,X_t)-\Phi(t,X_0)&=\int_0^t \nabla u(s,X_s)dB_s+B_t = \int_0^t \nabla u(s,\Phi^{-1}(s,Y_s))dB_s+B_t.
\end{align*}
\end{proof}
Let us call SDE \eqref{conjugated} by a \it{conjugated SDE}. Before proving the strong uniqueness of SDE \eqref{SDE}, we prove the following two lemmas which will be used frequently. 
\begin{lemma} \label{2.10}
For any $\lambda_1,\lambda_2\in\R$ and $b$ satisfying the condition \eqref{condition}, 
\be  \label{eq2.12}
\sup_x \E\exp\Big[\lambda_1\int_0^T b(s,B^x_s)dB^x_s+\lambda_2 \int_0^T b^2(s,B^x_s)ds\Big] <\infty.
\ee
\end{lemma}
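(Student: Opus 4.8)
The plan is to reduce \eqref{eq2.12} to the exponential integrability of the additive functional $\int_0^T b^2(s,B^x_s)\,ds$ already supplied by Proposition \ref{2.3}, and to peel off the stochastic integral by a Cauchy--Schwarz split against a Dol\'eans--Dade exponential.

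First I would record that $b^2$ falls within the scope of Proposition \ref{2.3}. Since $\norm{b^2}_{L^{q/2,1/2}([0,T],L^{p/2}_x)}=\norm{b}_{L^{q,1}([0,T],L^p_x)}^2<\infty$ and $L^{q/2,1/2}\hookrightarrow L^{q/2,1}$, we have $b^2\in L^{q/2,1}([0,T],L^{p/2}_x)$, while $\frac{2}{q/2}+\frac{d}{p/2}=2\big(\frac{2}{q}+\frac{d}{p}\big)=2$ (this is precisely the integrability already used to verify the Novikov condition in the proof of Theorem \ref{2.4}). Hence, for every $c\in\R$, applying Proposition \ref{2.3} to $f=c\,b^2$ --- the case $c\le0$ being trivial because $b^2\ge0$ --- gives
\[
\sup_{x\in\R^d}\E\exp\Big[c\int_0^T b^2(s,B^x_s)\,ds\Big]<\infty .
\]
In particular, Proposition \ref{2.2} applied to $b^2$ yields $\sup_x\E\int_0^T b^2(s,B^x_s)\,ds<\infty$, so $s\mapsto b(s,B^x_s)\in L^2(\Omega\times[0,T])$ and the It\^o integral in \eqref{eq2.12} is well defined.

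Next I would split the exponent: writing $b$ for $b(s,B^x_s)$,
\[
\lambda_1\int_0^T b\,dB^x_s+\lambda_2\int_0^T b^2\,ds
=\tfrac12\Big(2\lambda_1\int_0^T b\,dB^x_s-2\lambda_1^2\int_0^T b^2\,ds\Big)+\tfrac12(2\lambda_1^2+2\lambda_2)\int_0^T b^2\,ds ,
\]
so that, by the Cauchy--Schwarz inequality,
\[
\E\exp\Big[\lambda_1\int_0^T b\,dB^x_s+\lambda_2\int_0^T b^2\,ds\Big]
\le\big(\E\,\mathcal E_T\big)^{1/2}\Big(\E\exp\big[(2\lambda_1^2+2\lambda_2)\textstyle\int_0^T b^2\,ds\big]\Big)^{1/2},
\]
where $\mathcal E_t:=\exp\big[2\lambda_1\int_0^t b\,dB^x_s-\tfrac12\int_0^t(2\lambda_1 b)^2\,ds\big]$ is the stochastic exponential of the continuous local martingale $2\lambda_1\int_0^\cdot b\,dB^x$. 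Since $\mathcal E$ is a nonnegative local martingale, hence a supermartingale with $\mathcal E_0=1$, we get $\E\,\mathcal E_T\le1$ uniformly in $x$, and the second factor is finite uniformly in $x$ by the previous step. Taking $\sup_{x\in\R^d}$ completes the proof.

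I do not foresee a genuine analytic obstacle here; the only points requiring care are the exponent bookkeeping showing that $b^2$ sits in the ``doubled-critical'' Lorentz scale so that Proposition \ref{2.3} applies, and checking that the stochastic integral is well posed before the splitting is invoked --- both essentially already present in Section \ref{section 3.1}.
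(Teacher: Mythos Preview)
Your proposal is correct and follows essentially the same route as the paper: a Cauchy--Schwarz split into a Dol\'eans--Dade exponential and an exponential of $\int_0^T b^2(s,B^x_s)\,ds$, the latter handled by Proposition \ref{2.3} after checking that $b^2\in L^{q/2,1}([0,T],L^{p/2}_x)$ with $\frac{2}{q/2}+\frac{d}{p/2}=2$. The only cosmetic difference is that you bound the stochastic exponential by the supermartingale inequality $\E\mathcal E_T\le1$, whereas the paper invokes Novikov's condition (itself available from Proposition \ref{2.3}) to get $\E\mathcal E_T=1$; either suffices.
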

\begin{proof}
If we denote $\mathscr{E}(M)_t$ by a Dol\'eans-Dade exponential of the martingale $M_t$, then by H\"older's inequality,
\begin{align}
&\E\exp\Big[\lambda_1\int_0^T b(s,B^x_s)dB^x_s+\lambda_2 \int_0^T b^2(s,B^x_s)ds\Big] \nonumber \\
&\leq \bigg[\E \mathscr{E}\Big[\int_0^T 2\lambda_1b(s,B^x_s)dB^x_s\Big]\bigg]^{1/2}\bigg[\E \exp\Big[(\lambda_2+\lambda_1^2)\int_0^Tb^2(s,B^x_s)ds\Big]\bigg]^{1/2}. \label{zz}
\end{align}
 Since $b\in L^{q,1}([0,T],L^p_x)$, it follows that $b^2\in L^{q/2,1/2}([0,T], L^{p/2}_x)$. Letting $\tilde{q}=\frac{q}{2}$ and $\tilde{p}=\frac{p}{2}$, we have $b^2\in L^{\tilde{q},1}([0,T],L^{\tilde{p}}_x)$ with $\frac{2}{\tilde{q}}+\frac{d}{\tilde{p}}=2$. Therefore, the second term of \eqref{zz} is finite according to Proposition \ref{2.3}. The first term of \eqref{zz} is equal to 1 since the Novikov's condition is satisfied. 
\end{proof}
\belemma \label{2.11}
Let $X_t$ be a solution to SDE \eqref{SDE} with $b$ satisfying the condition \eqref{condition}. Then, for arbitrary $\lambda_1,\lambda_2\in\R$ and $f\in L^{q,1}([0,T],L^p_x)$,
\be \label{eq2.13}
\sup_x \E \exp\Big[\lambda_1 \int_0^T f(s,X_s)dB^x_s+\lambda_2 \int_0^T f^2(s,X_s)ds\Big]<\infty.
\ee
\enlemma
\begin{proof}
By Girsanov formula, LHS of \eqref{eq2.13} equals to
\begin{multline*}
\sup_x \E \bigg[\exp\Big[\lambda_1 \int_0^T f(s,B^x_s)dB^x_s+\lambda_2 \int_0^T f^2(s,B^x_s)ds\Big]\cdot  \\ \exp\Big[\int_0^T b(s,B^x_s)dB^x_s-\frac{1}{2}\int_0^Tb^2(s,B^x_s)ds\Big]\bigg].
\end{multline*}
Since both $b$ and $f$ belong to $L^{q,1}([0,T],L^p_x)$ with $\frac{2}{q}+\frac{d}{p}=1$, H\"older's inequality and Lemma \ref{2.10} conclude the proof.
\end{proof} 
\begin{remark} \label{re2.16}
It is proved in  \cite{ff3,f} that under the subcritical condition \eqref{KR}, quantities \eqref{eq2.12} and \eqref{eq2.13} can be controlled by $\norm{b}_{L^{q}([0,T], L^p_x)}$. At the Orlicz-critical regime \eqref{condition}, these quantities can be controlled by $\norm{b}_{L^{q,1}([0,T], L^p_x)}$ in some weak sense. In fact, by applying Lemma \ref{2.1} and Proposition \ref{2.2} to  Lemma \ref{2.10} and \ref{2.11}, one can show that there exists a constant $K=K(p,q,\lambda_1, \lambda_2)$ and functions $C_1,C_2:\R \rightarrow \R$ such that the following holds: for any $f$ and $b$ satisfying
\begin{align*}
\norm{f}_{L^{q,1}([0,T],L^p_x)},\norm{b}_{L^{q,1}([0,T],L^p_x)}<K,
\end{align*} 
we have
\begin{align*}
\sup_x \E\exp\Big[\lambda_1\int_0^T b(s,B^x_s)dB^x_s+\lambda_2 \int_0^T b^2(s,B^x_s)ds\Big] \leq C_1(K), 
\end{align*}  
\be \label{end1}
\sup_x \E \exp\Big[\lambda_1 \int_0^T f(s,X_s)dB^x_s+\lambda_2 \int_0^T f^2(s,X_s)ds\Big]\leq C_2(K).
\ee  
If we denote $X^{\mu}_t$ by a solution to SDE \eqref{SDE} with the initial distribution $\mu$, then \eqref{end1} implies that
\begin{align} \label{end2}
\sup_{\mu} \E \exp\Big[\lambda_1 \int_0^T f(s,X^{\mu}_s)dB^x_s+\lambda_2 \int_0^T f^2(s,X^{\mu}_s)ds\Big]\leq C_2(K)
\end{align} 
($\sup$ takes over all of the probability measures on $\R^d$). This is because if we denote $P_x$ by a law of $\{X_t \ |\ 0\leq t\leq T\}$ which is a solution of \eqref{SDE} starting from $x$, then $P_{\mu}=\int P_x d\mu(x)$ is a law of $\{X^{\mu}_t \ |\ 0\leq t\leq T\}$.
 
Also, by letting $\lambda_1=0$ and $\lambda_2=1$ in Lemma \ref{2.11} and using the inequality $1+x \leq e^x$, one can conclude that  there exists a function $C:\R \rightarrow \R$ such that for any $f$ and $b$ satisfying
\begin{align*}
\norm{f}_{L^{q,1}([0,T],L^p_x)}, \norm{b}_{L^{q,1}([0,T],L^p_x)}<K(p,q,0,1),
\end{align*}
we have 
\begin{align*}
\sup_x \E \int_0^T f^2(s,X_s) ds <C(K).
\end{align*}
\end{remark}
Now, we are ready to prove the strong uniqueness of SDE \eqref{SDE} under the condition  \eqref{condition} using Lemma \ref{2.10} and \ref{2.11}. Proof follows the argument  in \cite[Theorem 4.1]{ff3}.
\bepro  \label{2.17}
A strong solution to SDE \eqref{SDE} is unique up to $T_1$.
\enpro
\begin{proof}
Let $X^1_t$ and $X^2_t$ be strong solutions to SDE \eqref{SDE} starting from $x^1$ and $x^2$, respectively. According to Proposition \ref{2.9}, if we define $Y^i_t=\Phi(t,X^i_t)$, then $Y^i_t$ is a solution to the conjugated SDE \eqref{conjugated} starting from $y^i=\Phi(0,x^i)$, respectively. Thus, we have
\be 
d(Y^1_s-Y^2_s)=[\tilde{\sigma}(s,Y^1_s)-\tilde{\sigma}(s,Y^2_s)]dB_s.
\ee
For any $r\in (1,\infty)$, using  the It\^o's formula, 
\begin{align*}
d&|Y^1_s-Y^2_s|^r \\
&=\frac{r(r-1)}{2} \text{Trace}\big([\tilde{\sigma}(s,Y^1_s)-\tilde{\sigma}(s,Y^2_s)][\tilde{\sigma}(s,Y^1_s)-\tilde{\sigma}(s,Y^2_s)]^T\big)|Y^1_s-Y^2_s|^{r-2}ds+dM_s \\
&\leq \frac{r(r-1)}{2} |\tilde{\sigma}(s,Y^1_s)-\tilde{\sigma}(s,Y^2_s)|^2|Y^1_s-Y^2_s|^{r-2} ds + dM_s \\
&=|Y^1_s-Y^2_s|^rdA_s+dM_s
\end{align*}
for some martingale $M_s$ with zero mean (the martingale property can be checked as in \cite[Theorem 5.6]{f4}).
Here, we introduced an auxiliary process $A_t$ ($0\leq t\leq T_1$) satisfying
\be  \label{At}
\frac{r(r-1)}{2} \int^t_0 |\tilde{\sigma}(s,Y^1_s)-\tilde{\sigma}(s,Y^2_s)|^2ds=\int^t_0|Y^1_s-Y^2_s|^2dA_s,
\ee
and for any $c>0$,
\be 
\E e^{cA_t}<\infty
\ee
(with the aid of Lemma \ref{2.10} and \ref{2.11},  the proof of \cite[Lemma 4.5]{ff3}  applies to our case without any changes).
Thus, applying the  product rule,
\begin{align*}
d(e^{-A_s}|Y^1_s-Y^2_s|^r)&=-e^{-A_s}|Y^1_s-Y^2_s|^r dA_s+e^{-A_s}d|Y^1_s-Y^2_s|^r \leq e^{-A_s}dM_s.
\end{align*}
Integrating this inequality in time and then taking the expectation,  we have
\begin{align*}
\E [e^{-A_t}|Y^1_t-Y^2_t|^r] \leq |y^1-y^2|^r.
\end{align*}
Therefore, using the H\"older's inequality,
\begin{align*}
\E |Y^1_t-Y^2_t|^{r/2} &= \E e^{\frac{-A_t}{2}}|Y^1_t-Y^2_t|^{r/2} e^{\frac{A_t}{2}} \\
&\leq [\E e^{-A_t}|Y^1_t-Y^2_t|^{r} ]^{1/2}[\E e^{A_t}]^{1/2} \leq |y^1-y^2|^r[\E e^{A_t}]^{1/2},
\end{align*}
which implies that for each $t\in [0,T_1]$,
\be  \label{H\"older in x}
\E |Y^1_t-Y^2_t|^{r/2} \leq C|y^1-y^2|^{r/2}.
\ee
In particular, when $x^1=x^2$, we have $\E|Y^1_t-Y^2_t|^{r/2}=0$. Since trajectories are continuous and $\Phi(t,\cdot)$ is bijective, we obtain the strong uniqueness of SDE \eqref{SDE}.
\end{proof}
\begin{theorem} \label{2.13}
Existence and uniqueness of a strong solution to SDE \eqref{SDE} holds up to time $T_1$.
\end{theorem}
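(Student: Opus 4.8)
The plan is to assemble Theorem \ref{2.13} from the three pillars already established in this section, namely the weak existence (Theorem \ref{2.4}), the analytic machinery for the Kolmogorov PDE (Theorem \ref{2.5} and Proposition \ref{2.8}), and the strong uniqueness statement (Proposition \ref{2.17}), and then to invoke the Yamada--Watanabe principle. Concretely, I would first fix the time horizon $T_1$ from Proposition \ref{2.8} and recall from Proposition \ref{2.17} that, up to time $T_1$, pathwise uniqueness holds: any two strong solutions driven by the same Brownian motion from the same initial point $x$ coincide almost surely. Simultaneously, Theorem \ref{2.4} supplies, via the Girsanov transformation, a weak solution of SDE \eqref{SDE} on $[0,T]$, hence in particular on $[0,T_1]$; note that since the Orlicz-critical condition \eqref{condition} is stable under restriction of the time interval (the $L^{q,1}_t L^p_x$ norm only decreases), the drift restricted to $[0,T_1]$ still satisfies \eqref{condition}, so Theorem \ref{2.4} applies verbatim on $[0,T_1]$.

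Having weak existence together with pathwise uniqueness in hand, the Yamada--Watanabe theorem \cite{YW,YW1} yields the existence of a (pathwise unique) strong solution to SDE \eqref{SDE} on $[0,T_1]$ for every starting point $x\in\R^d$. I would write this out as a one-line deduction: the pair (weak existence, strong/pathwise uniqueness) $\Rightarrow$ (strong existence, uniqueness in law), and conversely the strong solution one obtains is again pathwise unique by Proposition \ref{2.17}. This is precisely the scheme announced in the introduction and at the opening of Section \ref{section 3}.

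The one genuine subtlety — and the step I expect to require the most care — is checking that Proposition \ref{2.17} really delivers pathwise uniqueness in the sense demanded by Yamada--Watanabe, i.e. uniqueness among all strong solutions on a common filtered probability space with a common Brownian motion, rather than merely uniqueness of the transformed process $Y_t=\Phi(t,X_t)$. Here one uses that $\Phi(t,\cdot)$ is, for each $t\in[0,T_1]$, a $C^1$-diffeomorphism of $\R^d$ (Proposition \ref{2.8}(i)) with the two-sided gradient bounds of Proposition \ref{2.8}(ii); hence the map $X_\cdot \mapsto Y_\cdot = \Phi(\cdot,X_\cdot)$ is a measurable bijection between continuous paths, and the vanishing of $\E|Y^1_t-Y^2_t|^{r/2}$ for all $t$ when $x^1=x^2$, combined with path-continuity, forces $X^1_t = \Phi(t,\cdot)^{-1}(Y^1_t) = \Phi(t,\cdot)^{-1}(Y^2_t) = X^2_t$ for all $t\in[0,T_1]$ almost surely. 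One should also briefly note that Proposition \ref{2.9} legitimately applies to \emph{any} strong solution $X_t$ of \eqref{SDE} (its proof only used the generalized It\^o formula valid for functions in $X^{q,p}$, available here by \cite[Theorem 3.7]{krylov}), so that the transformation step is not circular.

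Finally, for the statement as phrased — ``existence and uniqueness of a strong solution holds up to time $T_1$'' — I would make explicit that ``strong solution'' means a process adapted to the (augmented) filtration of the driving Brownian motion $B_t$ satisfying \eqref{SDE} for $0\le t\le T_1$, and that ``uniqueness'' here is pathwise uniqueness, which by the above is equivalent to uniqueness in law. No further estimates are needed; the entire content of the theorem is the bookkeeping that glues Theorem \ref{2.4}, Theorem \ref{2.5}, Proposition \ref{2.8}, Proposition \ref{2.9} and Proposition \ref{2.17} together through the Yamada--Watanabe principle.
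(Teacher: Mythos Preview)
Your proposal is correct and follows exactly the paper's approach: invoke weak existence (Theorem \ref{2.4}) and strong (pathwise) uniqueness (Proposition \ref{2.17}), then apply the Yamada--Watanabe principle to conclude. The paper's proof is a two-line version of precisely this argument; your additional remarks about the bijectivity of $\Phi(t,\cdot)$ and the noncircularity of Proposition \ref{2.9} are useful clarifications but not new ingredients.
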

\begin{proof}
Note that we proved the weak existence in Theorem \ref{2.4} and the strong uniqueness in Proposition \ref{2.17}. Therefore, according to the Yamabe-Watanabe principle \cite{YW,YW1}, we  obtain the existence and uniqueness of a strong solution to SDE \eqref{SDE} up to time $T_1$.
\end{proof}

In the next section, we  construct a strong solution to SDE \eqref{SDE} up to time $T$ as an application of Theorem \ref{2.13}.

\section{Sobolev regularity of a solution} \label{section 4}
In this section, we study the regularity and stability properties of a solution to SDE \eqref{SDE} under the condition \eqref{condition}. In Section \ref{section 4.1}, we construct a  stochastic flow to  SDE \eqref{SDE}. Section \ref{section 4.2} is devoted to study the Sobolev regularity and stability of the stochastic flow. 

\subsection{Construction of the stochastic flow} \label{section 4.1}
Let us first define a stochastic flow.
\begin{definition}
(Stochastic flow). A map $(s,t,x,w) \rightarrow \phi(s,t,x)(w)$, $0\leq s\leq t\leq T$ is called a \it{stochastic flow} associated to the stochastic differential equation \eqref{SDE} on the filtered space with a Brownian motion $(\Omega, \mathcal{F}, \mathcal{F}_t, P, B_t)$ provided that it satisfies: \\
(i) For any $x\in \R^d$ and $0\leq s\leq T$, the process $X^s_{t,x}=\phi(s,t,x)$ for $s\leq t\leq T$ is a $\mathcal{F}_{s,t}$-adapted solution to SDE \eqref{SDE}. Here, $\mathcal{F}_{s,t}:=\sigma(B_u-B_r|s\leq r\leq u\leq t)$. \\
(ii) $w$-almost surely, $\phi(s,t,x)=\phi(u,t,\phi(s,u,x))$ holds for any $0 \leq s \leq u\leq t\leq T$ and $x\in \R^d$. 
\end{definition}
 We refer to \cite{ku1} for the classical theory of stochastic flows. This classical theory has been extended to a large class of SDEs with singular coefficients. For instance, Flandoli et al. \cite{gub} constructed a regular stochastic flow when the SDE with additive noise possess a low H\"older regularity of drift. 

In this section, we prove that a stochastic flow  associated with SDE \eqref{SDE} exists under the Orlicz-critical condition   \eqref{condition}.
The following theorem, combined with Proposition \ref{2.17}, immediately implies Theorem \ref{theorem1} and the first part of Theorem \ref{theorem2}.
\begin{theorem} \label{3.2}
 There exists a stochastic flow $\phi$ to \eqref{SDE} up to time $T$.
\end{theorem}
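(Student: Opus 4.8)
The plan is to construct the flow on the full time interval $[0,T]$ by iterating the local construction from Theorem \ref{2.13}. Fix a drift $b$ satisfying \eqref{condition}. Since $b\in L^{q,1}([0,T],L^p_x)$, by Remark \ref{appre} the quantity $\norm{b}_{L^{q,1}([s,t],L^p_x)}$ is absolutely continuous in the endpoints, so we may partition $[0,T]$ into finitely many subintervals $0=S_0<S_1<\cdots<S_N=T$ such that on each $[S_{j-1},S_j]$ the norm $\norm{b}_{L^{q,1}([S_{j-1},S_j],L^p_x)}$ is small enough for the hypotheses of Proposition \ref{2.8}, Theorem \ref{2.5}, and hence Theorem \ref{2.13} to apply (all the smallness conditions there are of the form $\norm{b}_{L^{q,1}}<$ some explicit constant depending only on $p,q$, and the length of the interval is bounded by $T$). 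On each subinterval this gives existence and uniqueness of a strong solution to SDE \eqref{SDE} started from an arbitrary $\mathcal{F}_{S_{j-1}}$-measurable initial condition; equivalently, a map $x\mapsto \phi(S_{j-1},t,x)$ for $t\in[S_{j-1},S_j]$ which is $\mathcal{F}_{S_{j-1},t}$-adapted and solves the equation.

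First I would make precise the one-step construction: for each $j$ and each starting point $x$, Theorem \ref{2.13} (applied with time origin shifted to $S_{j-1}$ and with $T_1$ replaced by $S_j-S_{j-1}$) yields a pathwise-unique strong solution $X$ on $[S_{j-1},S_j]$; by the Yamada–Watanabe theory this solution is a measurable functional of $(x,B_\cdot-B_{S_{j-1}})$, so setting $\phi(S_{j-1},t,x)(w):=X_t$ defines a jointly measurable, $\mathcal{F}_{S_{j-1},t}$-adapted family. Then I would define $\phi$ on a general pair $0\le s\le t\le T$ lying in possibly different blocks by composition: if $S_{j-1}\le s\le S_j\le\cdots\le S_{k-1}\le t\le S_k$, set $\phi(s,t,x)=\phi(S_{k-1},t,\phi(S_{k-2},S_{k-1},\cdots\phi(s,S_j,x)\cdots))$, and analogously handle the case where $s$ and $t$ lie in the same block. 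The cocycle property $\phi(s,t,x)=\phi(u,t,\phi(s,u,x))$ then follows, on each block and hence globally, from the pathwise uniqueness of the strong solution: both $\phi(s,t,x)$ and $t\mapsto\phi(u,t,\phi(s,u,x))$ solve \eqref{SDE} on $[u,t]$ with the same initial value $\phi(s,u,x)$, so they agree a.s. The adaptedness of each composition factor to the disjoint $\sigma$-algebras $\mathcal{F}_{S_{j-1},S_j}$ gives the required measurability of the composite.

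The two points requiring care, which I expect to be the main obstacles, are (a) obtaining a single exceptional null set that works simultaneously for all $s,u,t$ and all $x$ — this is needed for condition (ii) in the definition of a stochastic flow, which quantifies over all $x\in\R^d$ after fixing $w$ — and (b) propagating the solution across block boundaries when the initial condition is itself a random variable rather than a deterministic point. For (b), one checks that if $\xi$ is $\mathcal{F}_{S_{j-1}}$-measurable then $t\mapsto\phi(S_{j-1},t,\xi)$ on $[S_{j-1},S_j]$ is still the pathwise-unique strong solution with initial value $\xi$; this follows by conditioning on $\xi$ and using that the one-step solution is a measurable functional of $(x,B_\cdot-B_{S_{j-1}})$ with the increment independent of $\mathcal{F}_{S_{j-1}}$, together with Remark \ref{re2.16}, equation \eqref{end2}, which furnishes the exponential integrability bounds uniformly over all initial distributions $\mu$ and hence legitimizes the Girsanov-based arguments with a random start. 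For (a), the standard remedy applies: first fix $s,u,t,x$ in a countable dense set and establish the identity off a null set, then upgrade to all $x$ using the Hölder-in-$x$ continuity estimate \eqref{H\"older in x} from the proof of Proposition \ref{2.17} (which, transported through $\Phi$ and $\Phi^{-1}$ whose Lipschitz constants are controlled by Proposition \ref{2.8}(ii), gives $\E|\phi(s,t,x)-\phi(s,t,x')|^{r/2}\lesssim|x-x'|^{r/2}$, so Kolmogorov's continuity criterion yields a version continuous in $x$), and finally to all $s,u,t$ by continuity of trajectories.

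Assembling these pieces: the local well-posedness of Theorem \ref{2.13} on each block of a finite partition, the composition/cocycle argument powered by pathwise uniqueness, the uniform exponential integrability \eqref{end2} to handle random initial conditions, and the Kolmogorov continuity upgrade via \eqref{H\"older in x}, produces a stochastic flow $\phi(s,t,x)$ on $[0,T]$ satisfying both defining properties. In particular $\phi(0,t,x)$ is, for every $x\in\R^d$, the unique strong solution to \eqref{SDE} up to time $T$, which is exactly Theorem \ref{theorem1}; and the existence statement is the first assertion of Theorem \ref{theorem2}. I expect the bookkeeping in (a)–(b) — rather than any new analytic estimate — to be where the real work lies, since all the hard PDE and integrability input has already been isolated in Sections \ref{section 3.1}–\ref{section 3.3}.
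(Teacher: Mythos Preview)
Your proposal is correct and follows essentially the same strategy as the paper: partition $[0,T]$ into finitely many subintervals on which $\norm{b}_{L^{q,1}}$ is small, invoke the local well-posedness of Theorem \ref{2.13} on each block, glue by composition, and use the H\"older-in-$x$ estimate \eqref{H\"older in x} together with Kolmogorov's criterion to obtain a version that works for all $x$ off a single null set. The only organizational difference is that the paper routes the Kolmogorov step through the conjugated SDE \eqref{conjugated}: it first proves the two moment bounds $\E|Y_t^x-Y_s^x|^r\lesssim|t-s|^{r/2}$ and $\E|Y_t^x-Y_t^y|^r\lesssim|x-y|^r$ directly for $Y$ (the former is an easy BDG estimate since the conjugated SDE has no drift and bounded diffusion), applies Kolmogorov to get a jointly $(t,x)$-H\"older flow $\psi$ for \eqref{conjugated}, and only then defines $\phi(s,t,x):=\Phi^{-1}(t,\psi(s,t,\Phi(s,x)))$ before gluing. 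Your version stays with $\phi$ throughout and transports the estimate through $\Phi,\Phi^{-1}$ at the end; this is equivalent, and your extra care with points (a) and (b)---in particular invoking \eqref{end2} to justify random initial data---makes explicit some bookkeeping the paper leaves implicit.
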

The main ingredient to prove Theorem \ref{3.2} is the Kolmogorov regularity theorem. Thanks to Proposition \ref{2.9} and Theorem \ref{2.13}, there exists a strong solution $Y^y_t$, $0\leq t\leq T_1$, to \eqref{conjugated}. We first prove the H\"older regularity of $Y^y_t$ using the method in \cite{f}.
\bepro
There exists some constant $C$ such that for any $1\leq r< \infty$, $0\leq t<s\leq T_1$, and $x,y\in \R^d$, 
\begin{align*}
\E|Y_t^x-Y_s^x|^r \leq C|t-s|^{\frac{r}{2}}, \quad \E|Y_t^x-Y_t^y|^r \leq C |x-y|^r.
\end{align*}

\enpro
\begin{proof}
Let us prove the first inequality. Applying the Burkholder-Davis-Gundy inequaltiy and using the fact that $\norm{\nabla u}_{L^\infty([0,T_1]\times \R^d)}$ is finite, one can conclude that
\begin{align*}
\E |Y_t^y-Y_s^y|^r&= \E |\int_s^t (I+\nabla u(\sigma,\Phi^{-1}(r,Y^x_\sigma)))dB_\sigma|^r \\
&\leq C\E |\int_s^t |I+\nabla u(\sigma,\Phi^{-1}(\sigma,Y^x_\sigma))|^2 d\sigma|^{\frac{r}{2}} \leq C|t-s|^{\frac{r}{2}}.
\end{align*}
We have already obtained the second inequality in \eqref{H\"older in x}. 
\end{proof} 
Now, one can  prove Theorem \ref{3.2} by applying the Kolmogorov's regularity theorem. 
\begin{proof}[Proof of Theorem \ref{3.2}]
Since both $\Phi$ and $\Phi^{-1}$ are  continuous in $(t,x)$, we first prove the same statement for the conjugated SDE  \eqref{conjugated}.
Thanks to the Kolmogorov's regularity theorem, one can construct a stochastic flow $\psi$ associated with SDE \eqref{conjugated} up to time $T_1$, which is a version of $Y^y_t$, satisfying the following property: almost surely,  $\psi(s,\cdot,\cdot)$ is $(\alpha, \beta)$-H\"older continuous for each $0\leq s\leq T_1$ and any $0<\alpha<\frac{1}{2}$, $0<\beta<1$. In order to construct a stochastic flow of SDE \eqref{SDE}, let us define 
\begin{align*}
\phi(s,t,x):=\Phi^{-1}(t,\psi(s,t,\Phi(s,x)))
\end{align*}  for $0\leq s\leq t\leq T_1$. It is obvious that  $\phi$ is a  stochastic flow associated with \eqref{SDE} up to time $T_1$, and almost surely, $\phi(s,\cdot,\cdot)$ is continuous for each $0\leq s\leq T_1$. 

Now, we extend this construction globally up to time $T$. Divide $[0,T]$ into the finite number of intervals $[T_{k-1},T_{k}]$, $1\leq k\leq N$, such that the stochastic flow $\phi$ of SDE \eqref{SDE} on each $[T_{k-1},T_{k}]$ can be constructed. More precisely, we take a  sufficiently small interval $[T_{k-1},T_{k}]$ such that the following property holds: if $u^k$ is a solution to PDE
\be \label{3.1}
\begin{cases}
u^k_t + \frac{1}{2}\Delta u^k+b\cdot \nabla u^k+b=0, \quad T_{k-1}\leq t\leq T_k, \\
u^k(T_k,x)=0,
\end{cases}
\ee
then $u^k$  satisfies the conditions in  Proposition \ref{2.8}. In other words,
 $\Phi^k(t,x)=x+u^k(t,x)$ is a global diffeomorphism for each $T_{k-1}\leq t\leq T_k$ and 
\begin{align} \label{infinity}
\frac{1}{2}< \norm{\nabla \Phi^k(t,x)}_{L^\infty([T_{k-1},T_k]\times \R^d)}, \norm{\nabla^{-1} \Phi^k(t,x)}_{L^\infty([T_{k-1},T_k]\times \R^d)}<2.
\end{align} 
 Repeating the arguments mentioned before, one can construct a stochastic flow $\phi(s,t,x)$ associated with SDE \eqref{SDE} for $T_{k-1}\leq s\leq t\leq T_k$. Then, we can glue them together as follows: for each $0\leq s\leq t\leq T$, choose the indices $i$ and $j$ satisfying
\begin{align*}
T_{i-1} \leq s<T_{i}<\cdots<T_{j}<t\leq T_{j+1},
\end{align*} 
and then define
\be 
\phi(s,t,\cdot)=\phi(T_{j},t,\cdot) \circ \phi(T_{j-1},T_j,\cdot) \circ  \dots \circ \phi(s,T_i,\cdot).
\ee
Here, composition happens in the spatial variable. It is obvious that $\phi$ satisfies the properties of the stochastic flow. 
\end{proof} 
\subsection{Sobolev regularity and stability of the stochastic flow} \label{section 4.2}
In the previous section, we constructed the stochastic flow $\phi$ associated with SDE \eqref{SDE}. In this section, we show that $\phi$ is almost surely weakly differentiable in the spatial variable. More precisely, we prove the following theorem, which is a restatement of the second part of Theorem \ref{theorem2}:
\begin{theorem}\label{3.4}
 For each $r\in [1,\infty)$ and $t\in [0,T]$, $\phi(0,t,\cdot)$ is weakly differentiable almost surely and its weak derivative satisfies
\be 
\sup_{x\in \R^d} \E |\nabla \phi(0,t,\cdot)|^r <\infty.
\ee
\end{theorem}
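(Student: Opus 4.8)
The plan is to transfer the Sobolev regularity from the conjugated SDE \eqref{conjugated} to the original SDE \eqref{SDE} via the diffeomorphism $\Phi$, and then to glue the estimates along the finite partition $[T_{k-1},T_k]$ used in the proof of Theorem \ref{3.2}. First I would work on a single time window $[0,T_1]$ and establish that the flow $\psi(0,t,\cdot)$ of the conjugated SDE is weakly differentiable with $\sup_{y}\E|\nabla\psi(0,t,y)|^r<\infty$ for every $r\in[1,\infty)$. Since \eqref{conjugated} has no drift and diffusion coefficient $\tilde\sigma(t,y)=I+\nabla u(t,\Phi^{-1}(t,y))$ which is bounded and merely measurable (not Lipschitz, because $\nabla^2 u$ is only in $L^{q,1}_t L^p_x$), the natural route is the one of Fedrizzi--Flandoli \cite{ff2}: approximate $b$ by smooth $b_n$, obtain smooth flows $\psi_n$ whose Jacobians $J_n=\nabla\psi_n$ solve the linearized (random ODE) equation $dJ_n = \nabla\tilde\sigma_n(t,\psi_n)J_n\,dB_t$, and get a uniform-in-$n$ bound on $\sup_x\E|J_n(0,t,x)|^r$. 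The uniform bound comes from It\^o's formula applied to $|J_n|^r$ exactly as in the proof of Proposition \ref{2.17}: one produces an auxiliary increasing process $A^n_t$ controlling $\int_0^t|\nabla\tilde\sigma_n(s,\psi_n)|^2ds$, and the exponential integrability $\sup_x\E e^{cA^n_t}\le C(c)$ uniformly in $n$ follows from Lemma \ref{2.11} together with the stability estimate \eqref{stability estimate 1} applied to $u_n$ versus $u$ (so that $\nabla\tilde\sigma_n$ is controlled in $L^{q,1}_tL^p_x$ uniformly in $n$, since $\nabla^2 u_n$ is). Here Remark \ref{re2.16} — in particular \eqref{end2}, the version of \eqref{end1} uniform over the initial law — is what lets us run the argument along the approximating flows rather than Brownian motion.

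Next I would upgrade this uniform moment bound to weak differentiability of the limit flow. The scheme is: (a) $\psi_n(0,t,\cdot)\to\psi(0,t,\cdot)$ in a suitable sense (e.g. in $L^r(\Omega)$ for each fixed $x$, which follows from the weak-existence/strong-uniqueness machinery of Section \ref{section 3}, or from a stability estimate of the form $\sup_x\E|\psi_n(0,t,x)-\psi(0,t,x)|^r\to 0$); (b) the uniform bound $\sup_n\sup_x\E|\nabla\psi_n(0,t,x)|^r<\infty$ gives, for a.e.\ $x$ in a fixed ball and after integrating against a test function, weak compactness of $\{\nabla\psi_n(0,t,\cdot)\}$ in $L^r(\Omega\times K)$; (c) identifying the weak limit with the distributional gradient of $\psi(0,t,\cdot)$ via integration by parts against $C_c^\infty$ test functions and passing to the limit in $n$. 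Lower semicontinuity of the $L^r(\Omega)$-norm under weak limits then yields $\sup_x\E|\nabla\psi(0,t,x)|^r<\infty$. This is precisely the argument structure of \cite{ff2}, and since we already have all the exponential-integrability inputs on the Orlicz-critical scale, no genuinely new idea is needed beyond being careful with the critical exponents.

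Then I would pass from $\psi$ to the flow $\phi(0,t,x)=\Phi^{-1}(t,\psi(0,t,\Phi(0,x)))$ of the original SDE. By Proposition \ref{2.8}, both $\Phi(t,\cdot)$ and $\Phi^{-1}(t,\cdot)$ are $C^1$ diffeomorphisms with $\tfrac12\le\|\nabla\Phi^{\pm1}(t,\cdot)\|_{L^\infty}\le 2$; hence composition with these Lipschitz, bi-Lipschitz maps preserves weak differentiability, and the chain rule gives $\nabla\phi(0,t,x)=\nabla\Phi^{-1}(t,\psi)\,\nabla\psi(0,t,\Phi(0,x))\,\nabla\Phi(0,x)$ a.e., whose Hilbert--Schmidt norm is bounded pointwise by $4\,|\nabla\psi(0,t,\Phi(0,x))|$. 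Taking $r$-th moments and using the change of variables $x\mapsto\Phi(0,x)$ (whose Jacobian is bounded above and below) yields $\sup_x\E|\nabla\phi(0,t,x)|^r\le C_r\sup_y\E|\nabla\psi(0,t,y)|^r<\infty$ on $[0,T_1]$. Finally, to reach arbitrary $t\in[0,T]$ I would use the partition $0=T_0<T_1<\cdots<T_N=T$ from the proof of Theorem \ref{3.2} and the flow property $\phi(0,t,\cdot)=\phi(T_j,t,\cdot)\circ\cdots\circ\phi(0,T_1,\cdot)$: on each window the above gives a uniform $L^\infty_x(L^r_\Omega)$ bound on the Jacobian, and the chain rule for weak derivatives of compositions (the outer maps being, on each window, weakly differentiable with the required moment bounds, the inner ones measurable) combined with H\"older's inequality in $\omega$ — splitting the product of $N$ Jacobian factors into $N$ factors with exponent $Nr$, each of which is finite by the single-window estimate — propagates weak differentiability and the moment bound up to time $T$. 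The main obstacle is step one: obtaining the \emph{uniform-in-}$n$ exponential integrability of the auxiliary process $A^n_t$ controlling $\int_0^t|\nabla\tilde\sigma_n|^2\,ds$ along the approximating flows; this is where the critical nature of $(p,q)$ bites, and the resolution is to invoke the Orlicz-critical estimates of Remark \ref{re2.16} (via \eqref{end2}) together with the stability bound \eqref{stability estimate 1} for $u_n$, ensuring $\|\nabla^2 u_n\|_{L^{q,1}_tL^p_x}$ stays small uniformly in $n$ so that Lemma \ref{2.11} applies with a uniform constant.
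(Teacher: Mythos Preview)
Your overall strategy---approximate $b$ by smooth $b_n$, derive uniform-in-$n$ moment bounds on the Jacobian of the conjugated flow via It\^o's formula and an auxiliary process whose exponential moments are controlled by Remark \ref{re2.16}, then pass to a weak limit and identify it as the distributional gradient---is exactly the paper's route (Proposition \ref{3.7} and Theorem \ref{3.5}). The place where your plan diverges, and where it has a genuine gap, is the final gluing across the partition $[T_{k-1},T_k]$.

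You propose to first obtain weak differentiability of the \emph{limit} flow $\phi$ on each window and then compose, invoking a ``chain rule for weak derivatives of compositions'' with outer maps weakly differentiable and inner maps measurable. That is not a valid chain rule: to compose weakly differentiable maps you need one factor to be (bi-)Lipschitz, and the single-window conclusion $\sup_x \E|\nabla\phi(T_{k-1},T_k,x)|^r<\infty$ does \emph{not} give almost-sure Lipschitz continuity of $x\mapsto\phi(T_{k-1},T_k,x)$. Separately, your H\"older-in-$\omega$ argument for the product of Jacobians requires the $Nr$-th moment on each of $N$ windows; but the admissible window size in Proposition \ref{3.7} depends on the target moment through condition \eqref{123''} (roughly $\|\nabla^2 u^k\|\lesssim r^{-1}$), so asking for moment $Nr$ forces a finer partition with more windows, and the requirement escalates without closing. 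The paper sidesteps both issues by gluing at the \emph{smooth} level: one bounds $\sup_{t,x}\E|\nabla\phi_n(0,t,x)|^r$ directly, using that for smooth $b_n$ the chain rule is classical and the flow increments on disjoint windows are conditionally independent, so conditioning on $\mathcal F_{T_{k}}$ yields
\[
\E\Big|\prod_k \nabla\phi_n(T_{k-1},T_k,z_{k-1})\Big|^r \ \le\ \prod_k \sup_{y}\E|\nabla\phi_n(T_{k-1},T_k,y)|^r,
\]
with no escalation of moments. Only after this uniform bound is in hand does the paper extract a single weak-$*$ limit in $L^\infty(\R^d,L^r(\Omega))$ and identify it, via Theorem \ref{3.5} and a dominated-convergence argument, as $\nabla\phi(0,t,\cdot)$. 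Reworking your proof so that the composition and the moment bound are both carried out on the smooth approximations (and using conditioning rather than H\"older across windows) removes the gap.
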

This theorem is proved in several steps. First of all, we approximate $b$ by suitable smooth drifts $b_n$, and then show the weak compactness of  stochastic flows $\phi_n$ associated with smooth drifts $b_n$. We also obtain the convergence of stochastic flows $\phi_n$ to $\phi$ in a suitable topology. Combining these results,  one can conclude the proof of Theorem \ref{3.4}. 

Recall that we first constructed a stochastic flow on each small time interval, and then we obtained a global stochastic flow by gluing together. Due to this nature of the stochastic flow, we need to take a careful approximation to $b$. Let us define $K=K(p,q,0,1)$ and $N=N(d)$ by constants from the Remark \ref{re2.16} and  Proposition $\ref{A.5}$, respectively. Also, we denote $[T_{k-1},T_k]$'s, a the partition of $[0,T]$, by the sub-intervals  on which arguments in the proof of Theorem \ref{3.2} are valid and satisfying the following two conditions: \\
(i) For each $k$,
\be  \label{nam1}
\norm{b}_{L^{q,1}([T_{k-1},T_k],L^p_x)} < \min\{K, C_0\}
\ee
(constant $C_0$ is from Remark \ref{stable} with $[T_{k-1},T_k]$ in place of $[0,T]$). \\
(ii) Solution $u^k$ constructed in \eqref{3.1} satisfies
\be  \label{nam2}
\norm{u^k}_{X^{q,p}([T_{k-1},T_k])}, \norm{\mathcal{M}(\nabla^2 u^k)}_{L^{q,1}([T_{k-1},T_k],L^p_x)} < \min\{\frac{K}{\sqrt{2N^2}}, \frac{K}{\sqrt{4C_1N^2}}\}
\ee
(constant $C_1$ is given by $C_1=16\bar{C}^4$, where $\bar{C}>1$ is a constant from Remark \ref{stable} with $[T_{k-1},T_k]$ in place of $[0,T]$).

Let us briefly explain what these conditions mean. First condition means that the stability estimate \eqref{stability estimate 1} of PDE \eqref{3.1} holds on each interval $[T_{k-1},T_k]$. Second condition says that $u^k$'s are small enough in some sense, which is a crucial assumption in order to apply the results in Remark \ref{re2.16}. It is possible to construct such partition by taking each sub-interval $[T_{k-1},T_k]$ sufficiently small.

Now, assume that not only $b_n$ converges to $b$ in $L^{q,1}([0,T], L^p_x)$, but also converges in the following sense: for each $k$,
\be \label{strong converge}
b_n \rightarrow b \qquad \text{in} \quad L^{q,1}([T_{k-1},T_k], L^p_x).
\ee 
For smooth drift $b_n$ satisfying \eqref{strong converge}, let $u^k_n$ be a solution to PDE \eqref{3.1}  with $b_n$ in place of  $b$. From \eqref{nam2} and \eqref{strong converge}, one can check that for each $k$,
\begin{multline} \label{123}
\limsup_n \norm{u^k_n}_{X^{q,p}([T_{k-1},T_k])},  \limsup_n \norm{\mathcal{M}(\nabla^2u^k_n)}_{L^{q,1}([T_{k-1},T_k],L^p_x)} \\
 < \min\{\frac{K}{\sqrt{2N^2}}, \frac{K}{\sqrt{4C_1N^2}}\}
\end{multline}
(see the condition \eqref{nam1} and  Remark \ref{stable}),
and $\Phi^k_n(t,x)=x+u^k_n(t,x)$ satisfy
\begin{gather*}
\frac{1}{2}< \norm{\nabla \Phi_n^k(t,x)}_{L^\infty([T_{k-1},T_k]\times \R^d)}<2, \quad \frac{1}{2}< \norm{\nabla^{-1} \Phi^k_n(t,x)}_{L^\infty([T_{k-1},T_k]\times \R^d)}<2.
\end{gather*}
 Let $\phi_n$ be a stochastic flow associated with the drift $b_n$ constructed as in the proof of Theorem \ref{3.2}. More precisely, $\phi_n$ is constructed on each interval $[T_{k-1},T_k]$, and then glued together. Under the condition \eqref{strong converge}, we show that the stochastic flow $\phi_n$ converges to $\phi$ in the following sense:
\begin{theorem} \label{3.5}
Suppose that smooth drifts $b_n$ converge to $b$ in the sense of \eqref{strong converge}, and the following quantity is uniformly bounded in $n$:
\be \label{dkdk}
\sup_{t\in [0,T],x\in \R^d}\E \exp\Big[2\int_0^tb_n(s,B^x_s)dB^x_s-\int_0^t b_n^2(s,B^x_s)ds\Big].
\ee Then, for any $r\in [1,\infty)$ and $x\in \R^d$, we have 
\be \label{stability}
\lim_{n\rightarrow \infty} \sup_{0\leq t\leq T}\E |\phi_n(0,t,x)-\phi(0,t,x)|^r=0.
\ee 
\end{theorem}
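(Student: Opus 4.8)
The plan is to run an $L^r(\Omega)$ estimate for the conjugated SDEs \eqref{conjugated} on each subinterval $[T_{k-1},T_k]$ of the fixed partition, then transfer it back to $\phi_n,\phi$ through the diffeomorphisms $\Phi^k_n,\Phi^k$ and glue in $k$. Fix $[T_{k-1},T_k]$ and write $Y_t,Y^n_t$ for the conjugated flows attached to $b,b_n$ on that interval, and $u,u_n,\Phi,\Phi_n,\tilde\sigma,\tilde\sigma_n$ for the associated objects, so that $d(Y^n_t-Y_t)=[\tilde\sigma_n(t,Y^n_t)-\tilde\sigma(t,Y_t)]dB_t$ with $\tilde\sigma_n(t,y)=I+\nabla u_n(t,\Phi_n^{-1}(t,y))$. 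Since $\|\nabla u\|_\infty,\|\nabla u_n\|_\infty\le\tfrac12$ the coefficients are uniformly bounded, hence $Y_t,Y^n_t$ have all moments, uniformly in $n$. Take $r\ge2$ (the case $r<2$ following by H\"older). Applying It\^o's formula to $|Y^n_t-Y_t|^r$, splitting $\tilde\sigma_n(t,Y^n_t)-\tilde\sigma(t,Y_t)=[\tilde\sigma_n(t,Y^n_t)-\tilde\sigma_n(t,Y_t)]+[\tilde\sigma_n(t,Y_t)-\tilde\sigma(t,Y_t)]$, using the pointwise maximal-function estimate (Proposition \ref{A.5}) together with the bi-Lipschitz bounds $\tfrac12\le\|\nabla\Phi_n^{\pm1}\|_\infty\le2$ to bound the first bracket by $C|Y^n_t-Y_t|\,g_n(t)$ with $g_n(t):=\mathcal{M}(\nabla^2u_n)(t,\Phi_n^{-1}(t,Y^n_t))+\mathcal{M}(\nabla^2u_n)(t,\Phi_n^{-1}(t,Y_t))$, and absorbing the cross term by Young's inequality, one arrives at
\[
d|Y^n_t-Y_t|^r\ \le\ |Y^n_t-Y_t|^r\,dA^n_t\ +\ C\,|\tilde\sigma_n(t,Y_t)-\tilde\sigma(t,Y_t)|^r\,dt\ +\ dM_t ,
\]
where $dA^n_t:=C(g_n(t)^2+1)\,dt$ and $M_t$ is a genuine martingale (true-martingale property by localization, using the uniform moment bounds on $Y_t,Y^n_t$, as in the proof of Proposition \ref{2.17}).

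The heart of the matter is the uniform bound $\sup_n\E\,e^{cA^n_{T_k}}<\infty$ for every $c>0$. This combines two facts. First, the Jacobian bounds on $\Phi_n$ give the change-of-variables estimate $\|\mathcal{M}(\nabla^2u_n)(s,\Phi_n^{-1}(s,\cdot))\|_{L^p_x}\le C\|\mathcal{M}(\nabla^2u_n)(s,\cdot)\|_{L^p_x}$ (and likewise after composing with $\Phi^{-1}$), so the uniform smallness \eqref{123} of $\|\mathcal{M}(\nabla^2u_n)\|_{L^{q,1}([T_{k-1},T_k],L^p_x)}$ is inherited by the integrands appearing in $g_n$. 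Second, the uniform bound \eqref{dkdk} on the Girsanov density from the $b_n$-SDE to Brownian motion lets us invoke the Khasminskii-type exponential estimate of Remark \ref{re2.16} — in particular \eqref{end2}, whose supremum over initial laws we also need below — along the trajectories $Y^n_t$ and $Y_t$ (equivalently $X^n_t=\Phi_n^{-1}(t,Y^n_t)$ and $X_t$), with constants independent of $n$. Granting this, the product rule gives $d(e^{-A^n_t}|Y^n_t-Y_t|^r)\le Ce^{-A^n_t}|\tilde\sigma_n(t,Y_t)-\tilde\sigma(t,Y_t)|^r\,dt+e^{-A^n_t}dM_t$; taking expectations, bounding $e^{-A^n_t}\le1$, and then removing $e^{-A^n_t}$ by H\"older exactly as in the proof of Proposition \ref{2.17}, one obtains, with $C$ independent of $n$,
\[
\sup_{T_{k-1}\le t\le T_k}\E|Y^n_t-Y_t|^{r/2}\ \le\ C\Big(\E|Y^n_{T_{k-1}}-Y_{T_{k-1}}|^{r}+\E\int_{T_{k-1}}^{T_k}|\tilde\sigma_n(s,Y_s)-\tilde\sigma(s,Y_s)|^r\,ds\Big)^{1/2}.
\]

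It remains to show the right-hand side tends to $0$. For the coefficient term write $\tilde\sigma_n(s,Y_s)-\tilde\sigma(s,Y_s)=[\nabla u_n-\nabla u](s,\Phi_n^{-1}(s,Y_s))+[\nabla u(s,\Phi_n^{-1}(s,Y_s))-\nabla u(s,\Phi^{-1}(s,Y_s))]$: the first summand is $\le\|\nabla(u_n-u)\|_{L^\infty([T_{k-1},T_k]\times\R^d)}\to0$ by the stability estimate \eqref{stability estimate 1} of Remark \ref{stable} (valid since $\|b_n\|,\|b\|<C_0$ on this interval by \eqref{nam1} and \eqref{strong converge}); the second is $\le C\|\Phi_n^{-1}-\Phi^{-1}\|_{L^\infty}\,[\mathcal{M}(\nabla^2u)(s,\Phi_n^{-1}(s,Y_s))+\mathcal{M}(\nabla^2u)(s,\Phi^{-1}(s,Y_s))]$ by Proposition \ref{A.5}, with $\|\Phi_n^{-1}-\Phi^{-1}\|_{L^\infty}\to0$ (from $\Phi_n\to\Phi$ uniformly plus the bi-Lipschitz bounds), so its $L^r(\Omega\times[T_{k-1},T_k])$ norm $\to0$ by the same exponential-integrability device. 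For the initial term: on the first interval $\E|Y^n_0-Y_0|^r=|u_n(0,x)-u(0,x)|^r\to0$ by \eqref{stability estimate 1}; on subsequent intervals one argues inductively, splitting $\phi_n(0,t,x)-\phi(0,t,x)$ through the point $\phi_n(0,T_{k-1},x)$ into a coefficient-difference contribution (handled above, uniformly over the starting point — this is where the supremum over initial laws in \eqref{end2} enters) and a flow-continuity contribution $\E|\phi(T_{k-1},t,\phi_n(0,T_{k-1},x))-\phi(T_{k-1},t,\phi(0,T_{k-1},x))|^r\le C\E|\phi_n(0,T_{k-1},x)-\phi(0,T_{k-1},x)|^r$ (the H\"older-in-$x$ estimate for the flow), the latter $\to0$ by induction. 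Finally, transferring back via $\phi_n(T_{k-1},t,\cdot)=\Phi_n^{-1}(t,\psi_n(T_{k-1},t,\Phi_n(T_{k-1},\cdot)))$ and using $\|\Phi_n^{\pm1}-\Phi^{\pm1}\|_{L^\infty}\to0$ yields \eqref{stability}.

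I expect the main obstacle to be the uniform-in-$n$ exponential moment bound for $A^n_t$: it forces one to marry the maximal-function Lipschitz estimate, the uniform bi-Lipschitz control of $\Phi_n$ (needed to keep the mixed-norm quantities comparable under the change of variables hidden in $\tilde\sigma_n$), the uniform smallness of $\|\mathcal{M}(\nabla^2u_n)\|_{L^{q,1}_tL^p_x}$ from \eqref{nam2} and \eqref{123}, and the uniform Girsanov-density bound \eqref{dkdk}, all simultaneously, so as to apply Remark \ref{re2.16} with constants independent of $n$. Once this is secured, the remainder is a direct (Gr\"onwall-free) estimate of the kind already used for Proposition \ref{2.17}, modulo the routine localization needed for the martingale term.
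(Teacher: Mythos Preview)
Your scheme is essentially the paper's Proposition \ref{3.6} lifted from $r=1$ to general $r$, but precisely this lift is where the argument breaks. When you apply It\^o's formula to $|Y^n_t-Y_t|^r$, the coefficient in front of $g_n(t)^2$ in $dA^n_t$ picks up a factor of order $r(r-1)$. To run the product-rule/H\"older step you then need $\sup_n\E\exp\big[C(r)\!\int g_n^2\big]<\infty$ uniformly in $n$. Remark \ref{re2.16} only delivers this when the relevant norms lie below the threshold $K=K(p,q,0,1)$, and the partition in \eqref{strong converge} is \emph{fixed in advance}: the smallness constants in \eqref{nam2} and \eqref{123} are calibrated to the single constant $C_1=16\bar C^4$ arising from the $r=2$ computation (this is exactly how Proposition \ref{3.6} is set up). For $r$ large enough that $C(r)$ exceeds that calibration, your uniform exponential bound is simply not available on this partition, so the claim ``$\sup_n\E e^{cA^n_{T_k}}<\infty$ for every $c>0$'' fails. (Note that later, in Proposition \ref{3.7}, the paper does introduce an $r$-dependent refinement $[T^r_{k-1},T^r_k]$ with \eqref{123''}; but Theorem \ref{3.5} is stated and proved on the fixed partition.)

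Relatedly, the hypothesis \eqref{dkdk} is not what feeds the exponential bound on $A^n_t$; that bound comes purely from the smallness conditions \eqref{nam1}, \eqref{nam2}, \eqref{123} via Remark \ref{re2.16}. The role of \eqref{dkdk} in the paper's proof is different and is precisely the missing ingredient in your bootstrap: one first proves the case $r=1$ (this is Proposition \ref{3.6}, glued via the semigroup property), and then for general $r$ writes
\[
\E|\phi_n(0,t,x)-\phi(0,t,x)|^r\ \le\ \big[\E|\phi_n-\phi|\big]^{1/2}\big[\E|\phi_n-\phi|^{2r-1}\big]^{1/2}.
\]
The first factor tends to $0$ by the $r=1$ case; the second is bounded uniformly in $n$ by Girsanov and the H\"older/Cauchy--Schwarz splitting $\E|\phi_n|^{2r-1}\le(\E|B^x_t|^{4r-2})^{1/2}\cdot\big(\E\exp[2\!\int b_n\,dB-\!\int b_n^2]\big)^{1/2}$, which is exactly where \eqref{dkdk} enters. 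This two-step route avoids any $r$-dependent demand on the partition.
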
 
In order to prove this theorem, we first show the statement of type \eqref{stability} for the conjugated SDE \eqref{conjugated}. We follow the arguments in \cite[Lemma 3]{ff2}, but due to the critical nature of exponents $p$ and $q$, the careful analysis is needed. We first prove this statement  for $r=1$, and later we will extend this to the general $r\in [1,\infty)$.
\bepro \label{3.6}
Let $Z^n$ and $Z$ be random variables and assume that smooth drifts $b_n$ converge to $b$ in the sense of \eqref{strong converge}. On each interval $ [T_{k-1},T_k]$, let us denote $X^n_t$ by a strong solution to SDE \eqref{SDE} with a drift $b_n$ and the initial condition $X_{T_{k-1}}^n=Z^n$, and similarly $X_t$ by a strong solution to SDE \eqref{SDE} with a drift $b$ and the initial condition $X_{T_{k-1}}=Z$. Then, for some constant $C$ independent of $Z^n$ and $Z$, 
\begin{align*}
\limsup_{n\rightarrow \infty} \sup_{T_{k-1}\leq t\leq T_k}\E |\Phi^k_n(t,X^n_t)-\Phi^k(t,X_t)| \leq C\limsup_{n\rightarrow \infty}\E|\Phi^k_n(T_{k-1},Z^n)-\Phi^k(T_{k-1},Z)|,
\end{align*} 
\be \label{stability 1} 
\limsup_{n\rightarrow \infty} \sup_{T_{k-1}\leq t\leq T_k}\E |X_t^n-X_t| \leq C\limsup_{n\rightarrow \infty} \E|Z_n-Z|.
\ee
\enpro 
\begin{proof}
Step 1. Proof the first inequality : without loss of the generality, let us only consider the case $T_{k-1}=0, T_k=T_1$. Throughout the proof, we use the simplified notations $u_n:=u^1_n$, $u:=u^1$, $\Phi_n:=\Phi^1_n$, $\Phi:=\Phi^1$, and $L^{q,1}_t(L^p_x):=L^{q,1}([0,T_1],L^p_x)$ (recall that $u^k_n$ is a solution to PDE \eqref{3.1} with $b_n$ in place of $b$). If we define that for $0\leq t\leq T_1$,
\begin{align*}
Y^n_t=\Phi_n(t,X^n_t),\quad Y_t=\Phi(t,X_t),
\end{align*} then $Y^n_t$, $Y_t$ are solutions to the conjugated SDE \eqref{conjugated} with 
\begin{align*}
\tilde{\sigma}_n(t,x)=I+\nabla u_n(t,\Phi_n^{-1}(t,x)),\quad  \tilde{\sigma}(t,x)=I+\nabla u(t,\Phi^{-1}(t,x)),
\end{align*} and the initial conditions $Y_0^n=\Phi_n(0,Z^n)$, $Y_0=\Phi(0,Z)$, respectively. Using It\^o's formula,
\begin{align*}
d|Y^n_t-Y_t|^2=\text{Trace}[(\nabla u_n(t,X^n_t)-\nabla u(t,X_t))(\nabla u_n(t,X^n_t)-\nabla u(t,X_t))^T]dt + dM_t
\end{align*}
for some martingale $M_t$ with a zero mean. The martingale property of $M_t$ can be easily verified using the boundedness of $\nabla u_n$ and $\nabla u$.
Note that due to Remark \ref{stable}, 
\begin{align*}
|\nabla u_n(t,X_t^n)-\nabla u(t,X_t&)|=|(\nabla u_n(t,X_t^n)-\nabla u_n(t,X_t))+(\nabla u_n(t,X_t)-\nabla u(t,X_t))| \\
&\leq \bar{C}( |\nabla u_n(t,X_t^n)-\nabla u_n(t,X_t)|+\norm{b_n-b}_{L^{q,1}_t(L^p_x)}).
\end{align*} 
Thus, we have
\begin{align}
&d|Y_t^n-Y_t|^2 \leq 2\bar{C}^2(|\nabla u_n(t,X_t^n)-\nabla u_n(t,X_t)|^2+\norm{b_n-b}_{L^{q,1}_t(L^p_x)}^2)dt +dM_t \nonumber\\
&= 2\bar{C}^2|X_t^n-X_t|^2 \frac{|\nabla u_n(t,X_t^n)-\nabla u_n(t,X_t)|^2}{|X_t^n-X_t|^2}dt +2\bar{C}^2\norm{b_n-b}_{L^{q,1}_t(L^p_x)}^2dt +dM_t \nonumber\\
&\leq 16\bar{C}^4|Y_t^n-Y_t|^2 dA^n_t+16\bar{C}^4\norm{b_n-b}_{L^{q,1}_t(L^p_x)}^2dA^n_t+16\bar{C}^4\norm{b_n-b}_{L^{q,1}_t(L^p_x)}^2dt+dM_t, \label{zzz}
\end{align}
 where an auxiliary process $A^n_t$ is defined by
\begin{align*}
dA^n_t=\mathds{1}_{X^n_t \neq X_t} \frac{|\nabla u_n(t,X_t^n)-\nabla u_n(t,X_t)|^2}{|X_t^n-X_t|^2}dt.
\end{align*} 
Note that in order to derive the inequality \eqref{zzz}, we used  the fact that
\begin{align*}
|Y_t^n-Y_t| &= |X_t^n+u_n(t,X_t^n)-X_t-u(t,X_t)| \\
&\geq |X^n_t+u(t,X^n_t)-X_t-u(t,X_t)|-|u_n(t,X^n_t)-u(t,X^n_t)| \\
&\geq \frac{1}{2}|X^n_t-X_t|-\norm{u_n-u}_{L^\infty} \geq \frac{1}{2}|X^n_t-X_t|-\bar{C} \norm{b_n-b}_{L^{q,1}_t(L^p_x)}
\end{align*}
(see  Remark \ref{stable} and the conditions \eqref{infinity}, \eqref{nam1}, \eqref{strong converge}), which implies that
\be \label{4}
 |X^n_t-X_t| \leq 2\bar{C}(|Y_t^n-Y_t|+\norm{b_n-b}_{L^{q,1}_t(L^p_x)}).
\ee 
Therefore, setting $C_1=16\bar{C}^4$, from \eqref{zzz},
\begin{align*}
d(e^{-C_1A^n_t}|Y_t^n-Y_t|^2)&=e^{-C_1A^n_t}d(|Y_t^n-Y_t|^2)-C_1e^{-C_1A^n_t}|Y_t^n-Y_t|^2dA^n_t \\
&\leq e^{-C_1A^n_t}[C_1\norm{b_n-b}_{L^{q,1}_t(L^p_x)}^2dA^n_t+C_1\norm{b_n-b}_{L^{q,1}_t(L^p_x)}^2dt+dM_t].
\end{align*}
Integrating in $t$ and then taking the expectation, we obtain
\begin{multline} \label{zzzz}
\E e^{-C_1A^n_t}|Y_t^n-Y_t|^2  \leq \E|\Phi_n(0,Z^n)-\Phi(0,Z)|^2 \\
+C_1\norm{b_n-b}_{L^{q,1}_t(L^p_x)}^2\E\Big[ \int_0^t e^{-C_1A^n_s} dA^n_s+\int_0^t e^{-C_1A^n_s} ds\Big].
\end{multline}
We now prove that 
\be \label{zzzzz}
\limsup_{n } E\Big[\int_0^{T_1} e^{-C_1A^n_t} dA^n_t\Big]<\infty.
\ee
Applying Proposition \ref{A.5}, we obtain
\begin{align*}
\E \int_0^{T_1} &e^{-C_1A^n_T}dA^n_t = \E \int_0^{T_1} e^{-C_1A^n_T}\frac{|\nabla u_n(t,X^n_t)-\nabla u_n(t,X_t)|^2}{|X_t^n-X_t|^2}dt \\
&\leq \E \int_0^{T_1} \frac{|\nabla u_n(t,X_t^n)-\nabla u_n(t,X_t)|^2}{|X^n_t-X_t|^2}dt \\
&\leq 2N^2\E \int_0^{T_1} (|\mathcal{M}(\nabla^2u_n)(t,X^n_t)|^2+|\mathcal{M}(\nabla^2u_n)(t,X_t)|^2) dt.
\end{align*}
Due to Remark \ref{re2.16}, for all sufficiently large $n$, the following quantities
\begin{align*}
\E\int_0^{T_1} 2N^2|\mathcal{M}(\nabla^2u_n)(t,X^n_t)|^2 dt,\quad \E\int_0^{T_1} 2N^2|\mathcal{M}(\nabla^2u_n)(t,X_t)|^2 dt
\end{align*} 
 are uniformly bounded since 
\begin{align*}
\sup_n \norm{\mathcal{M}(\nabla^2u_n)}_{L^{q,1}_t(L^p_x)}<\frac{K}{\sqrt{2N^2}}, \quad 
\limsup_n \norm{b_n}_{L^{q,1}_t(L^p_x)}<K, \quad \norm{b}_{L^{q,1}_t(L^p_x)}<K
\end{align*}
(see conditions \eqref{nam1}, \eqref{strong converge}, \eqref{123}, and \eqref{end2} in  Remark \ref{re2.16}).
Thus, we obtain \eqref{zzzzz}.

 Also, it is obvious that
\be \label{2}
\limsup_{n} \E \Big[\int_0^{T_1} e^{-C_1A^n_t} dt\Big] \leq T_1.
\ee
Furthermore, from the definition of $A^n_t$, we have
\begin{align*}
A^n_{T_1} \leq 2N^2\int_0^{T_1} (|\mathcal{M}(\nabla^2u_n)(t,X^n_t)|^2+|\mathcal{M}(\nabla^2u_n)(t,X_t)|^2) dt
\end{align*}
due to Proposition \ref{A.5}. Thanks to conditions \eqref{nam1}, \eqref{strong converge}, \eqref{123}, and Remark \ref{re2.16}, we have
\be \label{3}
\limsup_{n} \E e^{C_1A^n_{T_1}}<\infty.
\ee 
Therefore, applying \eqref{zzzz}, \eqref{zzzzz}, \eqref{2}, and \eqref{3} to the inequality
\begin{align*}
\E & |Y^n_t-Y_t| \leq [\E e^{-C_1A^n_t}|Y^n_t-Y_t|^2]^{1/2}[\E  e^{C_1A^n_t}]^{1/2},
\end{align*}
one can conclude the proof of the first statement of the proposition.

Step 2. Proof of \eqref{stability 1}: using \eqref{4}, on $t\in [T_{k-1},T_k]$,
\begin{align*}
|X^n_t-X_t| \leq 2\bar{C}(|\Phi^k_n(t,X^n_t)-\Phi^k(t,X_t)|+\norm{b_n-b}_{L^{q,1}([T_{k-1},T_k],L^p_x)}).
\end{align*}
 Combining this with the first statement of the proposition, for some constant $C$,
\begin{align*}
&\limsup_{n\rightarrow \infty} \sup_{T_{k-1}\leq t\leq T_k}\E |X_t^n-X_t| \leq C \limsup_{n\rightarrow \infty} \E |\Phi^k_n(T_{k-1},Z^n)-\Phi^k(T_{k-1},Z)| \\
&\leq C(\limsup_{n\rightarrow \infty} \E |\Phi^k_n(T_{k-1},Z^n)-\Phi^k_n(T_{k-1},Z)|+\limsup_{n\rightarrow \infty} \E |\Phi^k_n(T_{k-1},Z)-\Phi^k(T_{k-1},Z)|) \\
&\leq 2C \limsup_{n\rightarrow \infty} \E |Z^n-Z|.
\end{align*}  
 Here, we used the uniform Lipschitz continuity of $\Phi_n^k(t,\cdot)$ and the fact
\begin{align*}
\limsup_{n\rightarrow \infty} \E |\Phi^k_n(T_{k-1},Z)-\Phi^k(T_{k-1},Z)| &\leq \limsup_{n\rightarrow \infty} \norm{\Phi^k_n-\Phi^k}_{L^\infty} \\
&\leq C\limsup_{n\rightarrow \infty} \norm{b_n-b}_{L^{q,1}([T_{k-1},T_k],L^p_x)}=0
\end{align*}
which follows from the estimate \eqref{stability estimate 1}.

\end{proof} 

\begin{proof}[Proof of Theorem \ref{3.5}] 
When $r=1$, \eqref{stability} immediately follows from the estimate \eqref{stability 1} and the semigroup property of  the stochastic flow. For example, on the interval $[T_1,T_2]$,
\begin{align*}
\limsup_{n\rightarrow \infty} &\sup_{T_1\leq t\leq T_2}\E |\phi_n(0,t,x)-\phi(0,t,x)| \\
&=\limsup_{n\rightarrow \infty} \sup_{T_1\leq t\leq T_2}\E |\phi_n(T_1,t,\phi_n(0,T_1,x))-\phi(T_1,t,\phi(0,T_1,x))| \\
&\leq C\limsup_{n\rightarrow \infty}\E|\phi_n(0,T_1,x)-\phi(0,T_1,x)|=0.
\end{align*}
Similar argument works on each interval $[T_{k-1}, T_k]$ as well.
For general $r\in [1,\infty)$,
\begin{align*}
\E |\phi_n(0,t,x)&-\phi(0,t,x)|^r \leq [\E |\phi_n(0,t,x)-\phi(0,t,x)|]^{1/2}[\E |\phi_n(0,t,x)-\phi(0,t,x)|^{2r-1}]^{1/2}
\end{align*}
thanks to the H\"older's inequality. Note that
\begin{align*}
\E |\phi_n(0,t,x)-\phi(0,t,x)|^{2r-1}\leq C(\E |\phi_n(0,t,x)|^{2r-1}+\E |\phi(0,t,x)|^{2r-1}),
\end{align*}  and due to the Girsanov's theorem,
\begin{align*}
\E |\phi_n(0,t,x)|^{2r-1} &= \E \bigg[|B^x_t|^{2r-1}
\exp\Big[\int_0^tb_n(s,B^x_s)dB^x_s-\frac{1}{2}\int_0^tb_n^2(s,B^x_s)ds\Big]\bigg] \\
&\leq \E |B^x_t|^{4r-2}\cdot \E\exp\Big[2\int_0^tb_n(s,B^x_s)dB^x_s-\int_0^tb_n^2(s,B^x_s)ds\Big].
\end{align*} 
Thus, combining this with the uniform boundedness of the quantity \eqref{dkdk}, 
\begin{align*}
\sup_n \sup_{0\leq t\leq T}\E |\phi_n(0,t,x)-\phi(0,t,x)|^{2r-1} < \infty.
\end{align*}
Since we have already proved \eqref{stability} for $r=1$, the proof is completed.
\end{proof}
We now prove the main Theorem \ref{3.4}.  As in Proposition \ref{3.6}, we first show the Sobolev differentiablity of a solution $Y_t$ to the conjugated SDE \eqref{conjugated}. We introduce a refined notion of the convergence, which depends on the exponent $r$.
For given $1\leq r<\infty$, let us take sub-intervals $[T_{k-1}^r,T_k^r]$'s,  a partition of $[0,T]$, on which the arguments in the proof of Theorem \ref{3.2} are valid  and the following two conditions hold:
\be \label{123'}
 \norm{b}_{L^{q,1}([T_{k-1}^r,T_k^r],L^p_x)} < \min\{K,C_0\},
\ee 
\be \label{123''}
\norm{\nabla^2 u^k}_{L^{q,1}([T_{k-1}^r,T_k^r],L^p_x)}< \frac{K}{\sqrt{4r(2r-1)}}.
\ee
Here, $K=K(p,q,0,1)$ and $C_0$ are constants from  Remark \ref{re2.16} and Remark \ref{stable}, respectively. We say that smooth drifts $b_n$ \it{$r$-converge} to $b$ provided that for each $k$,
\be \label{strong r converge}
b_n \rightarrow b \qquad \text{in} \quad L^{q,1}([T_{k-1}^r,T_k^r], L^p_x).
\ee 
Note that due to the conditions \eqref{123'}, \eqref{123''}, and the stability result Remark \ref{stable}, we have
\be \label{last0}
\limsup_n \norm{\nabla^2 u^k_n}_{L^{q,1}([T_{k-1}^r,T_k^r],L^p_x)}< \frac{K}{\sqrt{4r(2r-1)}}.
\ee 
For $T_{k-1}^r\leq t\leq T_k^r$, let  us define $Y^{n,k}_t(y):=\Phi^k_n(t,X^n_t)$ to be a solution to the conjugated SDE \eqref{conjugated} starting from $y$ at $t=T_{k-1}^r$.
\bepro \label{3.7}
For each $r\in [1,\infty)$, suppose that smooth drifts $b_n$ $r$-converges to $b$ in the sense of \eqref{strong r converge}. Then, for each $k$, the quantity
\begin{align*}
\sup_{T_{k-1}^r\leq t\leq T_k^r}\sup_{y\in \R^d} \E |\nabla  Y^{n,k}_t(y)|^r 
\end{align*}
is uniformly bounded for all sufficiently large $n$.
\enpro
\begin{proof}
We follow the argument in \cite[Lemma 5]{ff2}. Without loss of the generality, let us consider the case $T_{k-1}^r=0$ and $T_k^r=T_1^r$, and use the simplified notations $u_n:=u_n^1$, $\Phi_n:=\Phi_n^1$,  $ Y^n:= Y^{n,k}$. Differentiating \eqref{conjugated}, we obtain
\begin{align*}
d(\nabla Y^n_t)=[\nabla^2u_n(t,\Phi_n^{-1}(t,Y^n_t))\nabla \Phi_n^{-1}(t,Y^n_t)\nabla Y^n_t]dB_t.
\end{align*}
Using the It\^o's formula, 
\be \label{ref r}
d|\nabla Y^n_t|^{2r} \leq 4r(2r-1)|\nabla Y^n_t|^{2r}|\nabla^2u_n(t,\Phi_n ^{-1}(t,Y^n_t)|^2dt+Z^n_tdB_t
\ee 
for some process $Z_t^n$ satisfying
\be \label{last1}
|Z^n_t| \leq C |\nabla^2 u_n(t,\Phi_n ^{-1}(t,Y^n_t))||\nabla Y^n_t|^{2r}
\ee 
for some universal constant $C$. Here, we used the fact that $\norm{\nabla \Phi_n^{-1}}_{L^\infty_{t,x}}<2$ (see Proposition \ref{2.8}).
If we define an auxilary process $A_t$ via
\begin{align*}
dA^n_t=|\nabla^2u_n(t,\Phi_n ^{-1}(t,Y^n_t))|^2dt,
\end{align*}
then by \eqref{ref r}, we have
\be \label{last}
d(\exp[-4r(2r-1)A^n_t]|\nabla Y^n_t|^{2r}) \leq \exp[-4r(2r-1)A_t^n]Z^n_tdB_t.
\ee 
Let $\tau_l$ be a stopping time defined by
\begin{align*}
\tau_l=\inf \{0\leq t\leq T^r_1 \ | \ |\nabla Y^n_t| >l\},
\end{align*}
and $\tau_l=T^r_1$ if the above set is empty ($\tau_l$ depends on $n$, but we drop the index $n$ to alleviate the notation).
Integrating \eqref{last} in $t$ and then taking the expectation, we have
\be \label{last2}
\E\big[\exp[-4r(2r-1)A^n_{t \wedge \tau_l}]|\nabla Y^n_{t \wedge \tau_l}|^{2r}\big]\leq d^r +\E \int_0^t \exp[-4r(2r-1)A_s^n]Z^n_s \mathds{1}_{s\leq \tau_l} dB_s
\ee 
since $\nabla Y^n_0 = I$ (recall that $|\cdot|$ denotes a Hilbert-Schmidt norm). Note that according to Lemma \ref{2.11} and \eqref{last1}, for each $l$,
\begin{align*}
\int_0^t \E \big[\exp[-4r(2r-1)A_s^n]Z^n_s \mathds{1}_{s\leq \tau_l}\big]^2ds \leq C^2 l^{4r}\int_0^t \E |\nabla^2 u_n(t,X^n_t)|^2 ds<\infty.
\end{align*}
This implies that the second term of RHS in \eqref{last2} is equal to zero. Thus, thanks to Fatou's lemma and \eqref{last2},
\begin{align*}
\E \big[\exp[-4r(2r-1)A^n_t]|\nabla Y^n_t|^{2r}\big] \leq \liminf_{l\rightarrow \infty} \E \big[\exp[-4r(2r-1)A^n_{t \wedge \tau_l}]|\nabla Y^n_{t \wedge \tau_l}|^{2r}\big] \leq d^r.
\end{align*}
Using the H\"older's inequality,
\begin{align*}
\E|\nabla Y^n_t|^r &\leq \big[\E \exp[-4r(2r-1)A^n_t]|\nabla Y^n_t|^{2r}\big]^{\frac{1}{2}}\big[\E \exp[4r(2r-1)A^n_t]\big]^{\frac{1}{2}} \\
&\leq d^{\frac{r}{2}} \E\big[ \exp[4r(2r-1)A^n_t]\big]^{\frac{1}{2}}.
\end{align*}
Due to the conditions \eqref{123'} and \eqref{last0}, for all sufficiently large $n$, the quantity
\begin{align*}
\E \exp[4r(2r-1)A^n_{T_1^r}]= \E \exp\Big[4r(2r-1)\int_0^{{T_1^r}}|\nabla^2u_n(s,X^n_s)|^2ds \Big]
\end{align*} is uniformly bounded (see \eqref{end2} in  Remark \ref{re2.16}). This concludes the proof.
\end{proof}
\begin{proof}[Proof of Theorem \ref{3.4}]
Fix $r\in [1,\infty)$ and then choose a partition $[T_{k-1}^r,T_k^r]$ of $[0,T]$ satisfying \eqref{nam1}, \eqref{nam2}, \eqref{123'}, and \eqref{123''}. Let us choose a smooth approximation $b_n$ to $b$ satisfying the following two conditions; \\
(i) $b_n$ converges to $b$ in $L^{q,1}([T_{k-1}^r,T_k^r], L^p_x)$ for each $k$, \\
(ii) the following quantity is uniformly bounded in $n$:
\be \label{1234}
\sup_{t\in [0,T],x\in \R^d}\E \exp\Big[2\int_0^tb_n(s,B^x_s)dB^x_s-\int_0^t b_n^2(s,B^x_s)ds\Big].
\ee 
It is possible to choose such approximation once we recall the proof of Lemma \ref{2.10} and Proposition \ref{2.3}. For all sufficiently large $n$,
\begin{align*}
\sup_{0\leq t\leq T}\sup_{x\in \R^d} \E |\nabla \phi_n(0,t,x)|^r
\end{align*}
is uniformly bounded due to Proposition \ref{3.7}, semigroup property, and the uniform boundedness of $\nabla \Phi_n$, $\nabla \Phi_n^{-1}$. Thus, for any $t\in [0,T]$,  there exist a random field $\Psi$ such that
\begin{align*}
\nabla \phi_n(0,t,\cdot) \rightharpoonup \Psi \quad \text{weak-*} \ \text{in} \ L^\infty( \R^d,L^r(\Omega))
\end{align*} 
up to an appropriate subsequence.
 From this, we will show that $\phi(0,t,\cdot)$ is almost surely  weakly differentiable, and its weak derivative is $\Psi$. For any test function $\varphi\in C^\infty_c(\R^d)$ and random variable $Z\in L^\infty(\Omega)$,
\begin{align}
\E \Big[ \big(\int_{\R^d} \Psi \varphi(x)dx\big)Z\Big] &= \lim_{n\rightarrow \infty} \E \Big[\big(\int_{\R^d} \nabla \phi_n(0,t,x)\varphi(x)dx\big)Z\Big] \nonumber \\
&= -\lim_{n\rightarrow \infty} \E \Big[\big(\int_{\R^d} \phi_n(0,t,x\big)\nabla \varphi(x)dx)Z\Big]  \nonumber \\
&= -\E \Big[\big(\int_{\R^d} \phi(0,t,x)\nabla \varphi(x)dx\big)Z\Big]. \label{z}
\end{align}
Let us check the validity of the last line \eqref{z} of the above identities. Note that Theorem \ref{3.5} implies that for each $x\in \R^d$,
\be \label{ldct} 
\E [\phi_n(0,t,x) \nabla \varphi(x) Z] \rightarrow \E [\phi(0,t,x) \nabla \varphi(x) Z]
\ee 
as $n \rightarrow \infty$. Also, according to the Girsanov theorem and H\"older's inequality, we have
\begin{align} \label{440}
|\E&[\phi_n(0,t,x)Z\nabla\varphi(x)]|\leq C \E|\phi_n(0,t,x)| \nonumber \\
&= C\E \bigg[|x+B_t|\cdot \exp\Big[\int_0^tb_n(s,B^x_s)dB^x_s-\frac{1}{2}\int_0^tb_n^2(s,B^x_s)ds\Big]\bigg] \nonumber \\
&\leq C\Big[\E |x+B_t|^{2}\Big]^{1/2}\cdot \bigg[\E \exp\Big[ 2\int_0^tb_n(s,B^x_s)dB^x_s-\int_0^tb_n^2(s,B^x_s)ds\Big]\bigg]^{1/2}.
\end{align}
It is obvious that for any compact set $K$ in $\R^d$,
\begin{align*}
\sup_{x\in K} \E |x+B_t|^{2} <\infty.
\end{align*} 
Since the quantity \eqref{1234} is uniformly bounded in $n$ and $\varphi$ has compact support, from \eqref{440}, we obtain
\begin{align*}
 \sup_n |\E[\phi_n(0,t,x)Z\nabla \varphi(x)]| \in L^1(\R^d).
\end{align*} 
Thus, \eqref{z}  follows from \eqref{ldct} and the Lebesgue dominated convergence theorem.

Therefore, from \eqref{z}, since $Z\in L^\infty(\Omega)$ is arbitrary, $w$-almost surely,
\begin{align*}
\int_{\R^d} \Psi\varphi(x)dx=-\int_{\R^d} \phi(0,t,x)\nabla \varphi(x)dx
\end{align*} 
holds for any $\varphi\in C^\infty_0(\R^d)$.
This immediately implies that the weak derivative of $\phi(0,t,\cdot)$ is equal to $\Psi$. Since $\Psi \in L^\infty(\R^d, L^r(\Omega))$,  we have
\begin{align*}
\sup_{x\in \R^d} \E |\nabla \phi(0,t,\cdot)|^r<\infty.
\end{align*}
This concludes the proof.
\end{proof}

\appendix
\section{Lorentz spaces and some lemmas} \label{section a}
In this appendix, we recall some useful properties about the  Lorentz spaces. Also, we introduce some useful lemmas  used frequently in this paper.
\begin{definition}
 (Lorentz spaces). A complex-valued function $f$ defined on the measure space $(X,\mu)$ belongs to the \it{Lorentz space} $L^{p,q}(X,d\mu)$ if the quantity
\be 
\norm{f}_{L^{p,q}(X)}:= p^{\frac{1}{q}}\norm{t\mu(|f|\geq t)^{\frac{1}{p}}}_{L^q(\R^+, \frac{dt}{t})}
\ee 
is finite.
\end{definition}
The concept of Lorentz spaces is introduced in \cite{lorentz}. These spaces can be regarded as  generalizations of the standard Lebesgue $L^p(X,d\mu)$ spaces. In the case when $q=p$, $L^{p,p}$ coincides with the standard $L^p$ spaces, and when $q=\infty$, $L^{p,\infty}$ coincides with the weak $L^p$ spaces. Lorentz spaces are quasi-Banach spaces in the sense that for some constant $c=c(p,q)>1$,
\be \label{quasi}
\norm{f+g}_{L^{p,q}} \leq c(\norm{f}_{L^{p,q}}+\norm{g}_{L^{p,q}})
\ee
for any $f,g\in L^{p,q}$, and it is complete with respect to $\norm{\cdot}_{L^{p,q}}$. Also, Lorentz spaces can be realized as a real interpolation of two $L^p$ spaces: for the exponents $1<p,p_1,p_2<\infty$, $0<\theta<1$, $1\leq q\leq \infty$ satisfying $\frac{1}{p}=\frac{1-\theta}{p_1}+\frac{\theta}{p_2}$,
\begin{align*}
[L^{p_1},L^{p_2}]_{\theta,q}=L^{p,q},
\end{align*} 
where $[\cdot,\cdot ]_{\theta,q}$ denotes the real interpolation (see \cite{interpolation} for details). 
\begin{remark} \label{appre}
From the definition of Lorentz spaces, we can easily check that the following property holds: if $p<\infty$, then for any $\epsilon>0$, there exists $\delta>0$ such that
\begin{align*}
\norm{f}_{L^{p,q}(A)} < \epsilon
\end{align*}
for all measurable set $A\subseteq X$ satisfying $\mu(A)<\delta$. 
Also, one can check that for any two disjoint measurable sets $A,B \subseteq X$ and $f\in L^{p,q}(X)$,
\begin{align*}
\norm{f}_{L^{p,q}(A)}+\norm{f}_{L^{p,q}(B)} \sim_{p,q} \norm{f}_{L^{p,q}(A \cup B)}.
\end{align*} 
\end{remark}
The following lemma is used to prove  Proposition \ref{2.7}.
\begin{lemma} \label{A.2}
Let us denote $P(t,x)$ by the standard heat kernel. Then, $\nabla P\in L^{q,\infty}(\R, L^p_x)$ for any exponents $p,q \in (1,\infty)$ satisfying $\frac{2}{q}+\frac{d}{p}=d+1$.
\end{lemma}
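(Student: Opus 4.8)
The statement to prove is Lemma \ref{A.2}: the heat kernel $P(t,x)$ satisfies $\nabla P \in L^{q,\infty}(\R, L^p_x)$ whenever $\frac{2}{q} + \frac{d}{p} = d+1$, with $p,q \in (1,\infty)$.

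\medskip

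The plan is to compute the relevant norms essentially explicitly, exploiting the exact scaling of the Gaussian. First I would recall that $P(t,x) = (4\pi t)^{-d/2} e^{-|x|^2/4t}$ for $t>0$ (and $P=0$ for $t\leq 0$), so $\nabla P(t,x) = -\frac{x}{2t}(4\pi t)^{-d/2} e^{-|x|^2/4t}$. For fixed $t>0$, the substitution $x = \sqrt{t}\,y$ gives
\[
\norm{\nabla P(t,\cdot)}_{L^p_x} = t^{-1/2}\, t^{-d/2}\, t^{d/(2p)} \cdot \norm{\,|y|\,(4\pi)^{-d/2} e^{-|y|^2/4}\,}_{L^p_y} = K_{p,d}\, t^{-\frac{1}{2} - \frac{d}{2} + \frac{d}{2p}},
\]
where $K_{p,d}$ is a finite constant (the $L^p_y$ norm of a fixed Schwartz function). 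Writing $\alpha := \frac{1}{2} + \frac{d}{2} - \frac{d}{2p} = \frac{1}{2} + \frac{d}{2}(1 - \frac1p)$, we have $\norm{\nabla P(t,\cdot)}_{L^p_x} = K_{p,d}\, t^{-\alpha}$ for $t>0$.

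\medskip

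Next I would identify this power function as a member of $L^{q,\infty}(\R_t)$. The function $t \mapsto t^{-\alpha}\1_{t>0}$ lies in the weak Lebesgue space $L^{1/\alpha,\infty}(\R)$, since for $\lambda>0$ the set $\{t>0 : t^{-\alpha} > \lambda\} = (0, \lambda^{-1/\alpha})$ has measure $\lambda^{-1/\alpha}$, and $\sup_\lambda \lambda \cdot |\{t^{-\alpha}>\lambda\}|^{\alpha} = 1 < \infty$. So it remains only to check that $1/\alpha = q$, i.e. that the exponent relation $\frac{2}{q} + \frac{d}{p} = d+1$ is exactly equivalent to $\alpha = 1/q$. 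Indeed $2\alpha = 1 + d - \frac{d}{p} = 1 + d\bigl(1 - \frac1p\bigr)$, so $\frac{2}{q} = 2\alpha = 1 + d - \frac{d}{p}$, which rearranges precisely to $\frac{2}{q} + \frac{d}{p} = d+1$. Therefore $\norm{\nabla P(t,\cdot)}_{L^p_x}$, as a function of $t$, belongs to $L^{q,\infty}(\R)$, which is exactly the assertion $\nabla P \in L^{q,\infty}(\R, L^p_x)$ (recall the mixed-norm Lorentz space $L^{q,\infty}(\R, L^p_x)$ is defined by first taking the $L^p_x$ norm and then the $L^{q,\infty}$ quasi-norm in $t$).

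\medskip

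There is essentially no serious obstacle here; the only points requiring a modicum of care are: (i) confirming the Gaussian scaling identity and that the resulting $y$-integral $\int_{\R^d} |y|^p e^{-p|y|^2/4}\,dy$ is finite (immediate), and (ii) bookkeeping the arithmetic so that the hypothesis $\frac{2}{q}+\frac{d}{p}=d+1$ matches the weak-$L^q$ membership of $t^{-\alpha}$ — which I verified above. One should also note the constraints $1<p,q<\infty$ guarantee $\alpha \in (1/2, (d+1)/2)$, in particular $\alpha>0$ so the power is genuinely singular only at $t=0$ and decays at $t=\infty$, consistent with being in a weak-$L^q$ space rather than $L^q$ itself (the borderline homogeneity is exactly why the Lorentz space $L^{q,\infty}$, not $L^q$, is the natural target — this is the reason the slightly stronger $L^{q,1}$ integrability is imposed on $b$ and $f$ when this lemma is applied via O'Neil's inequality in Proposition \ref{2.7}).
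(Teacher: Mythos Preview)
Your proof is correct and follows essentially the same approach as the paper: compute the $L^p_x$ norm of $\nabla P(t,\cdot)$ by Gaussian scaling to obtain a pure power $K_{p,d}\,t^{-\alpha}$, and then observe that this power lies in $L^{q,\infty}$ in $t$ precisely when $\alpha = 1/q$, which is equivalent to $\frac{2}{q}+\frac{d}{p}=d+1$. The paper's argument is terser (it absorbs constants and writes the exponent arithmetic in one line), but the substance is identical.
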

\begin{proof} Note that
\begin{align*}
\Big \vert D_{x_j}(\frac{1}{t^{d/2}}e^{-|x|^2/4t})\Big \vert = \Big \vert \frac{x_j}{2t}\frac{1}{t^{d/2}}e^{-|x|^2/4t}\Big \vert \leq \frac{|x|}{2t^{(d+2)/2}}e^{-|x|^2/4t}.
\end{align*}
Therefore,  using the condition $\frac{2}{q}+\frac{d}{p}=d+1$, for some constant $C=C(p,q)$,
\begin{align*}
\norm{\nabla P}_{L^{q,\infty}_t(L^p_x)} &\leq \norm{\norm{\frac{|x|}{2t^{(d+2)/2}}e^{-|x|^2/4t}}_{L^p_x}}_{L^{q,\infty}_t} =C\norm{\frac{t^{(p+d)/2p}}{2t^{(d+2)/2}}}_{L^{q,\infty}_t} = C\norm{\frac{1}{2}t^{-1/q}}_{L^{q,\infty}_t}<\infty.
\end{align*}

\end{proof}
 There are counterparts of the H\"older's and Young's inequalities for the Lorentz spaces. H\"older's inequality for the Lorentz spaces claims that for $1\leq p_1,p_2,p<\infty$ , $0<q_1,q_2,q\leq \infty$ satisfying $\frac{1}{p}=\frac{1}{p_1}+\frac{1}{p_2}$ and $\frac{1}{q}=\frac{1}{q_1}+\frac{1}{q_2}$,
\begin{align*}
\norm{fg}_{L^{p,q}(X,d\mu)} \leq C(p,q,p_1,q_1,p_2,q_2) \norm{f}_{L^{p_1,q_1}(X,d\mu)}\norm{g}_{L^{p_2,q_2}(X,d\mu)}.
\end{align*}
O'Neil's convolution inequality \cite{convolution} claims that for $1<p_1,p_2<\infty$, $0<q_1,q_2<\infty$ satisfying $1+\frac{1}{p}=\frac{1}{p_1}+\frac{1}{p_2}$  and $\frac{1}{q}=\frac{1}{q_1}+\frac{1}{q_2}$,
\begin{align*}
\norm{f*g}_{L^{p,q}(\R^d,dx)} \leq C(p,q,p_1,q_1,p_2,q_2) \norm{f}_{L^{p_1,q_1}(\R^d,dx)}\norm{g}_{L^{p_2,q_2}(\R^d,dx)}.
\end{align*} 
One can extend the O'Neil's convolution inequality to the mixed-norm Lorentz spaces. We in particular consider the case $p=q=\infty$ for our purposes (see Proposition \ref{2.7}):
\bepro \label{A.3}
Suppose that $p_1,p_2,q_1,q_2\in (1,\infty)$ and $r_1,r_2,s_1,s_2\in [1,\infty]$ satisfy $\frac{1}{p_1}+\frac{1}{p_2}=\frac{1}{q_1}+\frac{1}{q_2}=1$ and $\frac{1}{r_1}+\frac{1}{r_2}= \frac{1}{s_1}+\frac{1}{s_2}= 1$. Then, for any $f\in L^{q_1,r_1}(\R,L^{p_1,s_1}(\R^d))$ and $g\in L^{q_2,r_2}(\R,L^{p_2,s_2}(\R^d))$, 
\begin{align*}
\norm{f*g}_{L^\infty_{t,x}} \leq C(p_1,p_2,q_1,q_2,r_1,r_2,s_1,s_2) \norm{f}_{L^{q_1,r_1}_t(L^{p_1,s_1}_x)}\norm{g}_{L^{q_2,r_2}_t(L^{p_2,s_2}_x)}.
\end{align*}
\enpro
\begin{proof}
 Note that 
\begin{align*}
|f*g|(t,x)\leq \int_{\R}\int_{\R^d} |f(s,y)g(t-s,x-y)|dyds=\norm{f(\cdot,\cdot)g(t-\cdot,x-\cdot)}_{L^1_t(L^1_x)}.
\end{align*}
 Since $\norm{g}_{L^{q_2,r_2}_t(L^{p_2,s_2}_x)}$ is invariant under the operations $g(\cdot) \mapsto g(c+\cdot)$ and $g(\cdot) \mapsto g(-\cdot)$, it suffices to prove that 
\begin{align*}
\norm{fg}_{L^1_{t,x}} \leq C \norm{f}_{L^{q_1,r_1}_t(L^{p_1,s_1}_x)}\norm{g}_{L^{q_2,r_2}_t(L^{p_2,s_2}_x)}.
\end{align*} 
Using  H\"older's inequality for the Lorentz spaces, we obtain
\begin{align*}
\norm{fg}_{L^1_{t,x}}&=\int_{\R}\int_{\R^d}|f|(t,x)|g|(t,x)dxdt \\
&\leq C \int_{\R}\norm{f(t,\cdot)}_{L^{p_1,s_1}_x}\norm{g(t,\cdot)}_{L^{p_2,s_2}_x}dt \leq C \norm{f}_{L^{q_1,r_1}_t(L^{p_1,s_1}_x)}\norm{g}_{L^{q_2,r_2}_t(L^{p_2,s_2}_x)}.
\end{align*}
\end{proof}
We need a slight extension of the standard Banach fixed point theorem to the quasi-Banach spaces, since the Lorentz spaces are quasi-Banach spaces.
\bepro \label{A.4}
Suppose that $X$ is a quasi-Banach space, and for some $c> 1$,
\begin{align*}
\norm{x+y} \leq c(\norm{x}+\norm{y})
\end{align*} 
hold for any $x,y\in X$. Also, assume that for some $\theta>0$ satisfying $c\theta
<1$, a map $T:X\rightarrow X$ satisfy that for any $x,y\in X$,
\begin{align*}
|T(x)-T(y)| \leq \theta |x-y|.
\end{align*} 
Then, $T$ has a unique fixed point.
\enpro
\begin{proof}
The proof of Proposition \ref{A.4} is similar to the standard proof of Banach fixed point theorem. Choose an arbitrary $x_0\in X$ and let us define $x_n:=T(x_{n-1})$ inductively for $n\geq 1$. It is obvious that
\begin{align*}
d(x_{n+1},x_n) \leq \theta ^nd(x_1,x_0).
\end{align*}
Using a quasi-norm property of $X$, for any $m>n$,
\begin{align*}
d(x_m,x_n)&\leq cd(x_m,x_{n+1})+cd(x_{n+1},x_n) \\ &\leq c^2d(x_m,x_{n+2})+c^2d(x_{n+2},x_{n+1})+cd(x_{n+1},x_n)\\ &\leq \cdots \\ &\leq c^{m-(n+1)}d(x_m,x_{m-1})+\sum_{k=1}^{m-(n+1)}c^kd(x_{n+k},x_{n+k-1})\\ &\leq \Big[c^{m-(n+1)}\theta^{m-1}+\sum_{k=1}^{m-(n+1)}c^k\theta^{n+k-1}\Big]d(x_1,x_0) \\
&<  ((c \theta )^{m-1} c^{-n} +(1-c\theta)^{-1} c^{-(n-1)})d(x_1,x_0).
\end{align*}
This implies that $\{x_n\}$ is a Cauchy sequence, thus it converges to a limit $x^*$ in $X$ since $(X,d)$ is complete. Since $T$ is continuous, we can readily check that $x^*$ is a fixed point. Uniqueness is obvious.
\end{proof}

Now, we introduce some useful lemmas  used in the paper. 
\bepro \label{A.5}
Let us denote $\mathcal{M}$ by the Hardy-Littlewood maximal function. Then, there exists a constant $N=N(d)$ such that the following property holds: for any $u\in C^\infty(\R^d)$ and $x,y\in \R^d$,
\begin{align*}
|u(x)-u(y)| \leq N|x-y|(\mathcal{M}|\nabla u|(x)+\mathcal{M}|\nabla u|(y)).
\end{align*}
\enpro

The last proposition is a useful criteria to derive a global bijectivity of the map, which is called the \it{Hadamard lemma} (see \cite[Theorem V.59]{hadamard}).
\bepro \label{A.7} 
Suppose that a $C^k$($k\geq 1$) map $F:\R^d \rightarrow \R^d$ satisfies the following properties: \\
(i) $\nabla F(x)$ is non-singular for every $x\in \R^d$, \\
(ii) $\lim_{|x|\rightarrow \infty}|F(x)|=\infty$. \\
Then, $F$ is a $C^k$ diffeomorphism from $\R^d$ to itself. 
\enpro

\section*{Acknowledgement}
The author thanks to the advisor Fraydoun Rezakhanlou for introducing this problem and sharing interesting ideas.

\end{document}